\documentclass[12pt]{amsart}

\usepackage[utf8]{inputenc}
\usepackage[english]{babel}
 
\linespread{1.3}
\setlength{\parindent}{0em}
%\baselineskip=10pt
%\setlength{\parskip}{1em}
%\renewcommand{\baselinestretch}{1.3}
%\addtolength{\hoffset}{0cm}
%\addtolength{\textwidth}{2cm}

\usepackage{geometry}
\usepackage{anysize}
\usepackage{amsfonts}
\usepackage{amssymb}
\usepackage{amsmath}
\usepackage{amsthm}
\usepackage{amsrefs} % for bibliography
\usepackage[all]{xy}
\usepackage{enumerate}
\usepackage{multirow}
\geometry{a4paper,top=3cm,bottom=3cm,left=2.5cm,right=2.5cm}
\usepackage{color}
\usepackage{colonequals}
\usepackage{mathrsfs}

\definecolor{darkgreen}{rgb}{0,0.5,0}

\usepackage[
        draft=false,
        colorlinks, citecolor=darkgreen,
         %backref,       
        pdfauthor={Filippo F. Favale and Sonia Brivio},
        pdftitle={Kernel bundles over reducible curves with a node},
        linktocpage        
]{hyperref}

\newtheorem{definition}{Definition}[section]

\newtheorem{proposition}{Proposition}[section]
\newtheorem{corollary}[proposition]{Corollary}
\newtheorem*{Conjecture*}{Conjecture}
\newtheorem*{theorem*}{Theorem}
\newtheorem{lemma}[proposition]{Lemma}
\newtheorem{theorem}[proposition]{Theorem}

\newtheorem{remark}{Remark}[proposition]

\DeclareMathOperator{\GL}{GL}
\DeclareMathOperator{\Ker}{Ker}

\DeclareMathOperator{\Pic}{Pic}

\DeclareMathOperator{\Tor}{Tor}

\DeclareMathOperator{\Rk}{Rk}

\newcommand{\OO}{\mathcal{O}}

\newcommand{\ev}{ev}

%-------------------------------------------------------
%-------------------------------------------------------
%-------------------------------------------------------
%-------------------------------------------------------

\numberwithin{equation}{section}

\begin{document}

\title[Kernel bundles on nodal curves]{On kernel bundles over reducible curves with a node}

\author{Sonia Brivio}
\address{Dipartimento di Matematica e Applicazioni,
	Universit\`a degli Studi di Milano-Bicocca,
	Via Roberto Cozzi, 55,
	I-20125 Milano, Italy}
\email{sonia.brivio@unimib.it}

\author{Filippo F. Favale}
\address{Dipartimento di Matematica e Applicazioni,
	Universit\`a degli Studi di Milano-Bicocca,
	Via Roberto Cozzi, 55,
	I-20125 Milano, Italy}
\email{filippo.favale@unimib.it}

\date{\today}
\thanks{
\textit{2010 Mathematics Subject Classification}:  Primary:  14H60; Secondary: 14D20\\
\textit{Keywords}: Kernel bundle, Stability, nodal curves. \\
Both authors are members of GNSAGA-INdAM}

\maketitle
\pagestyle{plain}

\begin{abstract}
Given a vector bundle $E$ on a complex reduced curve $C$ and a subspace $V$ of $H^0(E)$ which generates $E$, one can consider the kernel of the evaluation map $ev_V:V\otimes \OO_C\to E$, i.e. the {\it kernel bundle } $M_{E,V}$ associated to the pair $(E,V)$. Motivated by a well known conjecture of Butler about the semistability of $M_{E,V}$ and by the results obtained by several authors when the ambient space is a smooth curve, we investigate the case of a reducible curve with one node. Unexpectedly, we are able to prove results which goes in the opposite direction with respect to what is known in the smooth case. For example, $M_{E,H^0(E)}$ is actually quite never $w$-semistable. Conditions which gives the $w$-semistability of $M_{E,V}$ when $V\subset H^0(E)$ or when $E$ is a line bundle are then given.
\end{abstract}

\section*{Introduction}
Let $C$ be a complex reduced projective  curve.  Let $(E,V)$ be  a  pair on $C$  given by  a  vector  bundle $E$  of rank $r$  and a vector space $V \subseteq H^0(E)$ of dimension $k \geq r+1$. These pairs have been studied extensively by many authors, in particular when $C$ is smooth, and are called {\it coherent systems}. For example, one can see \cite{BGMN} and \cite{KN}.
In this paper we will be interested in pairs  where  $E$ is generated by the global sections of $V$, i.e. when the evaluation map  
$\ev_V:V\otimes \OO_C\to E$ is surjective: we will call these pairs  {\it generated pairs}. In these case, the kernel $M_{E,V}$ of the evaluation map  $\ev_V$ is a vector bundle of rank $k-r$ on $C$ which fits into the following exact sequence
\begin{equation}
\label{DSB}
\xymatrix{
0\ar[r] & M_{E,V}\ar[r] & V\otimes \OO_C\ar[r]^-{\ev_V} & E\ar[r] & 0
}
\end{equation}
and it is called the {\it kernel bundle} (or the {\it Lazarfeld bundle})  of the pair $(E,V)$.  When $V = H^0(E)$, then it is denoted by $M_E$ and this case is said the {\it complete case}.  
\hfill\par
Generated pairs encode a lot of the geometry of the  curve as well as a lot of interesting informations about it. For example,  a generated pair $(E,V)$  defines a morphism  
$$\varphi_{E,V} \colon C \to G(k-r,V), \quad x \to \Ker(ev_{V,x}),$$
where $G(k-r,V)$ denotes the Grassmannian variety of $(k-r)$-dimensional linear  subspaces of $V$. Then, the exact sequence \ref{DSB} is actually the pull back by $\varphi_{E,V}$ of the following exact sequence on $G(k-r,V)$:
\begin{equation}
\xymatrix{
0 \ar[r]& 
{\mathcal U} \ar[r]&
V \otimes \OO_{G(k-r,V)} \ar[r]&
{\mathcal Q} \ar[r] &
0}
\end{equation}
where ${\mathcal U}$ and ${\mathcal Q}$ are respectively the  universal and quotient bundle on $G(k-r,V)$. 
\hfill\par
Kernel  bundles have been studied by many authors when the curve $C$ is smooth and irreducible, because of their rich applications (see \cite{La} for an overview). In particular, their stability properties, which have been studied with respect to different point of view (see, for instance, \cite{EL,M}) are closely related to higher rank Brill-Noether theory and moduli spaces of coherent systems (see \cite{BGMN,BBN1,BB} for example). Finally, they have been useful  in studying  theta divisors and the geometry of moduli space of vector bundles on curves (see  \cite{Be, B15,B17, Br,BF1,BV,Po}, for example). 
\\ \hfill \par

\noindent In the complete case, Butler, in his seminal work \cite{B1}, proved that on a smooth irreducible  curve of  genus $g$ the kernel bundle  $M_E$ is semistable  for any semistable $E$ of degree $d \geq 2rg$ and it is actually stable if $E$ is stable of degree $d>2gr$. 
In the case of line bundles, this result has been improved in several works by taking into consideration the Clifford index of the curve. For example, see \cite{B2,B-P,C,MS}.
\\ \hfill\par

For the general case, in \cite{B2} Butler made the following conjecture:

\begin{Conjecture*}
For a general smooth curve of genus $g \geq 3$  and a general choice of a generated pair $(E,V)$, where $E$ is  a semistable vector bundle, the kernel bundle $M_{E,V}$ is semistable. 
\end{Conjecture*}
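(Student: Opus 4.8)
The plan is to split the argument into a soft reduction and a hard construction; since this is a well-known open problem, what follows is a strategy rather than a complete proof. \textbf{Reduction to one example.} Fix a general curve $C$ of genus $g\geq 3$ and admissible invariants $(r,d,k)$ with $k\geq r+1$; one may assume $k\geq r+2$, since for $k=r+1$ the bundle $M_{E,V}$ has rank one and there is nothing to prove. Parametrise the generated pairs $(E,V)$ with $E$ semistable, $\operatorname{rk}E=r$, $\deg E=d$, $\dim V=k$ by an open subscheme $\mathcal{B}$ of a relative Quot/flag scheme over the moduli stack of semistable bundles, and globalise \ref{DSB} over $\mathcal{B}$ to a universal kernel bundle $\mathcal{M}$. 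Because slope semistability is an open condition in a flat family of pure sheaves with fixed Hilbert polynomial on a curve, the locus of $b\in\mathcal{B}$ with $\mathcal{M}_b$ semistable is open. Hence it suffices to produce, for a single general $C$ and every such $(r,d,k)$, one generated pair $(E,V)$ with $E$ semistable and $M_{E,V}$ semistable lying on the component of $\mathcal{B}$ of the general pair — the last requirement being automatic for $d$ large but needing care when $d$ is small.

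\textbf{Constructing the example.} I would build it by induction on $k$ together with a degeneration of $E$. For the inductive step: given a generated $(E,V)$ with $\dim V=k\geq r+2$ and a general hyperplane $V'\subset V$ for which $\ev_{V'}$ is still surjective, the inclusion $V'\otimes\OO_C\subset V\otimes\OO_C$ and the snake lemma yield a short exact sequence $0\to M_{E,V'}\to M_{E,V}\to\OO_C\to 0$; since $\mu(M_{E,V'})<\mu(M_{E,V})<0$, one can hope to deduce semistability of $M_{E,V}$ from that of $M_{E,V'}$ provided the extension class is sufficiently general (otherwise a lift of some $\OO_C(-D)$ into $M_{E,V}$ could destabilise). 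The base of the induction is the complete case $V=H^0(E)$ with $d$ large, covered by Butler's theorem \cite{B1}, respectively the case of line bundles ($r=1$), covered for general $C$ in the ranges of the Clifford-index estimates of \cite{B2,B-P,C,MS}. To pass from line bundles to higher rank one may additionally degenerate $E$ to a polystable bundle $E_1\oplus E_2$ of the same slope, with a compatible splitting $V=V_1\oplus V_2$, so that $M_{E,V}$ degenerates to $M_{E_1,V_1}\oplus M_{E_2,V_2}$; this is semistable once the two summands are semistable \emph{of the same slope}, which pins down the $\dim V_i$ and works when the relevant divisibility holds.

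\textbf{The main obstacle.} All of the above is soft, and reduces the conjecture to a single assertion of Brill--Noether type: for the general generated pair $(E,V)$ on the general curve there is no subsheaf $F\subset M_{E,V}$ with $\mu(F)>\mu(M_{E,V})$. Dualising \ref{DSB}, such an $F$ corresponds to global sections of $V^\vee\otimes F^\vee$ multiplying to zero in $E^\vee\otimes F^\vee$, so the statement is an upper bound on $h^0$ of twists of $M_{E,V}$ (and of their exterior powers, to exclude destabilising subsheaves of higher rank). These bounds genuinely fail on curves of small Clifford index — which is exactly why the word ``general'' is indispensable — and proving the optimal bound uniformly in $(r,d,k)$ seems to require either a limit-linear-series degeneration of $C$ to a chain of elliptic (or rational) components with a transversality analysis, or a dimension count on $\operatorname{Quot}(V\otimes\OO_C,E)$ showing that the destabilising locus has the expected positive codimension. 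Making either argument work for all invariants, compatibly with the genericity of $E$ inside the moduli of semistable bundles, is the crux, and is what leaves the conjecture open.
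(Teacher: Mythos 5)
There is no proof in the paper to compare your attempt against: the statement is Butler's conjecture, quoted from \cite{B2} and left open. The paper only records that it has been settled for line bundles on a general curve in \cite{BBN2}, and its own results concern the analogous question on reducible nodal curves, where the naive analogue in fact fails. You correctly identify the statement as an open problem and offer a strategy rather than a proof, which is the right call; so the only thing to assess is whether the strategy is sound, and there I see two concrete problems beyond the crux you yourself flag.

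First, your induction is internally inconsistent. From a general hyperplane $V'\subset V$ you get $0\to M_{E,V'}\to M_{E,V}\to\OO_C\to 0$ and propose to deduce semistability of $M_{E,V}$ from that of $M_{E,V'}$, i.e.\ to increase $k$; but the base case you name is the complete case $V=H^0(E)$, which is the \emph{maximal} $k$, so it cannot seed an upward induction (the natural base would be $k=r+1$, which you already dismissed as trivial). Moreover, since $\mu(M_{E,V'})<\mu(M_{E,V})<0$, neither direction of the extension gives any control: a lift of $\OO_C$ into $M_{E,V}$ always has slope $0>\mu(M_{E,V})$, so the whole difficulty is hidden in the phrase ``provided the extension class is sufficiently general'' --- this is precisely the hard step of \cite{BBN2} even for $r=1$. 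Second, your proposed degeneration of $C$ to a nodal chain runs directly into the paper's main theorem: Theorem \ref{MEVunstable} and Corollary \ref{cor1} show that on a reducible nodal curve the kernel bundle is strongly unstable (i.e.\ $w$-unstable for \emph{every} polarization) under very mild hypotheses, so semistability is not preserved in such limits and cannot be recovered by an openness argument; any degeneration proof must restrict to pairs satisfying condition $(\star)$ of Section \ref{sec3} and a carefully chosen polarization, as in Theorem \ref{THM:wstab}. Finally, the reduction-to-one-example step needs irreducibility of the parameter space of coherent systems with fixed $(r,d,k)$, which is not known in general. None of this is a defect of honesty --- the conjecture is open --- but as written the sketch would not reproduce even the known cases.
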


Much work has been done in the direction of solving this conjecture. In particular it has been completely proved in \cite{BBN2} in the case of line bundles. Moreover, many conditions for stability are given (see also \cite{BBN1}). 
\\ \hfill\par

It seems natural to ask whether similar results hold in the case of singular curves. The aim of this paper is to investigate stability properties for kernel bundles on a nodal reducible curve. More precisely, we will consider a complex reducible projective  curve $C$ with two smooth irreducible components $C_i$, of genus $g_i \geq 2$, $i = 1,2$, and a single node $p$. 
Some modifications to the environment are required, obviously. For example, the notion of semistability needs to be replaced with the notion of $w$-semistability for a given polarization $w$ (see Definition \ref{DEF:wSTAB}).  We will say that a  vector bundle on $C$ is {\it strongly unstable} if it is $w$-unstable for any polarization on the curve (see Definition \ref{DEF:STRUNST}). The theory and the results that we will need are summarized in Section \ref{sec1}.
\\ \hfill\par

Our first result is Theorem \ref{MEVunstable}, which is proved in Section \ref{sec2}. 
\begin{theorem*}
Let $C$ be a nodal curve as above. Let $(E,V)$ be  a generated pair on the curve $C$   and let $E_i$ be the restriction of $E$ to the component $C_i$.
If $E_i$ is semistable and $\dim (V \cap H^0(E_i(-p)))\geq 1$, then $M_{E,V}$ is strongly unstable and both its restrictions to the components are unstable. 
\end{theorem*}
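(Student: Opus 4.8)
The plan is to produce an explicit destabilizing subbundle of $M_{E,V}$ coming from the hypothesis $\dim(V\cap H^0(E_i(-p)))\geq 1$. Pick a nonzero section $s\in V\cap H^0(E_i(-p))$; viewed as a section of $E$ on $C$, it vanishes identically on the component $C_j$ ($j\neq i$) and vanishes at the node $p$ as a section of $E_i$. The key observation is that $s$ spans a trivial sub-line-bundle $\OO_C\hookrightarrow V\otimes\OO_C$ which, because $ev_V(s)$ vanishes at $p$ on $C_i$ and entirely on $C_j$, lands inside $M_{E,V}$ after a twist: more precisely the subsheaf of $M_{E,V}$ generated by $s$ together with the structure of the sequence \ref{DSB} gives a sub-line-bundle $L\subseteq M_{E,V}$ with $L$ of total degree at least $1$ (one gains $+1$ at the node on the $C_i$ side, and the section is "free" on $C_j$). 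I would compute the restrictions $L_i$, $L_j$ carefully from the local description at $p$, using that $M_{E,V}$ sits in \ref{DSB} and that $E_i$ being semistable controls $\deg E_i$.

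The next step is the numerical comparison. From \ref{DSB} one has $\deg M_{E,V}=-\deg E$ and $\rank M_{E,V}=k-r$, and similarly on each component $\deg (M_{E,V})_i$ is controlled by $\deg E_i$ and by the gluing at $p$ (the restriction of a bundle on $C$ to a component differs from the naive kernel on that component by a correction supported at $p$, as recalled in Section \ref{sec1}). I would then show that the slope of $L$ (resp. of $L_i$, $L_j$) with respect to an arbitrary polarization $w$ exceeds the slope of $M_{E,V}$ (resp. of its restrictions). This is where semistability of $E_i$ enters: it forces $\deg E_i$ to be large enough relative to $\rank E_i=r$ that the slope $\mu(M_{E,V})$ is sufficiently negative, while $L$ has slope $\geq 1/1>0$-ish after the twist — so the inequality $\mu_w(L)>\mu_w(M_{E,V})$ holds for \emph{every} $w$, giving strong instability, and the analogous inequality on each component gives instability of both restrictions.

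For the restrictions to the components: on $C_i$, the section $s$ (vanishing at $p$) produces a sub-line-bundle of $(M_{E,V})_i$ of degree $\geq 1-(\text{correction at }p)$, and one checks $\mu((M_{E,V})_i)<0$ again by semistability of $E_i$ and the rank-degree bookkeeping; on $C_j$, the section $s$ restricts to the zero section of $E_j$, so $\OO_{C_j}$ itself embeds in $(M_{E,V})_j$, and $\mu((M_{E,V})_j)=-\deg E_j/(k-r)<0$ since $\deg E_j>0$ (as $E_j$ is semistable of rank $r$ on a curve of genus $g_j\geq 2$ and is globally generated by a subspace of $V$, hence has positive degree). So $\OO_{C_j}$ destabilizes $(M_{E,V})_j$ directly.

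The main obstacle I anticipate is the careful local analysis at the node: one must pin down exactly how the sub-line-bundle generated by $s$ sits inside $M_{E,V}$ as a \emph{subbundle} (not merely a subsheaf) and compute its multidegree, since the restriction-to-components functor is not exact and introduces a length-$r$ (or smaller) correction at $p$ that must be tracked on both sides to make the slope inequalities tight. Once the local picture at $p$ is correct, the inequalities should follow from the rank/degree relations in \ref{DSB} together with the semistability bound on $\deg E_i$, uniformly in the polarization $w$.
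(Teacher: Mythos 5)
Your starting point is the right one---the paper's proof also builds its destabilizing subsheaves out of the sections of $V$ that vanish identically on one component---but the central object of your argument does not exist. A nonzero $s\in V\cap H^0(E_i(-p))$ gives a constant section of $V\otimes\OO_C$ that lies in $M_{E,V}=\Ker(\ev_V)$ only over the vanishing locus of $s$ as a section of $E$. Since $s$ vanishes identically on $C_j$ but not on $C_i$ (otherwise $s=0$), the subsheaf of $M_{E,V}$ it generates is a rank-one sheaf supported on $C_j$ alone, not a line bundle on all of $C$; no twist can repair this, because $\ev_V(s)\neq 0$ at a generic point of $C_i$. Indeed $H^0(M_{E,V})=\Ker(V\to H^0(E))=0$, so no constant section of $V\otimes\OO_C$ lies in $M_{E,V}$ over all of $C$. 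What one actually obtains is an inclusion $\OO_{C_j}(-p)\hookrightarrow M_{E,V}$ as a depth-one subsheaf of multirank $(0,1)$, whose Euler characteristic is $-g_j$, i.e.\ \emph{negative}: there is no ``gain of $+1$ at the node'' and no subsheaf of positive slope.

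Because the destabilizing subsheaf has negative slope, strong instability is not at all immediate, and this is where the real work lies; your proposal omits it. The paper assumes $M_{E,V}$ is $w$-semistable, compares $\mu_w(S_j\otimes\OO_{C_i}(-p))=-g_i/w_i$ with $\mu_w(M_{E,V})$ for \emph{both} components (the hypothesis $V\cap H^0(E_i(-p))\neq\{0\}$ is needed for $i=1$ and $i=2$; a single section only bounds a single weight), and obtains upper bounds on $w_1$ and $w_2$ whose sum is forced below $1$ only after establishing the inequality $k<d_1+d_2+r$. That inequality is the heart of the proof and requires a case analysis on $\mu(E_i)$ versus $2g_i-2$, using Riemann--Roch together with Xiao's Clifford-type bound for semistable bundles (Theorem \ref{THM:XIAOCLIFFORD}); none of this numerical input appears in your sketch. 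Your treatment of the restrictions is closer to the mark: the constant sections give $S_i\otimes\OO_{C_j}\subset M_{E,V}\otimes\OO_{C_j}$ of slope $0>-d_j/(k-r)$, which is exactly Lemma \ref{lemmatecnico1}(3). But note that $d_j>0$ does not follow from global generation alone (trivial bundles are globally generated); it follows from the hypothesis that $E_j(-p)$ has a nonzero section together with the semistability of $E_j$, which forces $d_j\geq r$.
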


As a corollary of this result, we have that the kernel bundle $M_E$ is strongly unstable for any globally generated vector bundle $E$ whose restrictions $E_i$ are semistable and not trivial (see Corollary \ref{cor2}). This is a somewhat unexpected result as in the smooth case, by the result of Butler, $M_E$ is always semistable for any semistable $E$ with degree sufficiently big. This impressive difference shows that it is worth going  on  this kind of problems. 
\\ \hfill \par

It is natural to consider the restrictions of a generated pair $(E,V)$ to the component $C_i$. In this way, we show that we get again a generated pair 
$(E_i,V_i)$. Nevertheless, it is not always true that the kernel bundle associated to $(E_i,V_i)$ is the restriction of $M_{E,V}$ to $C_i$. Indeed in the case of Theorem \ref{MEVunstable}, we have that $M_{E_i,V_i}$ is a destabilizing quotient for the restriction of $M_{E,V}$ to $C_i$.
\\ \hfill\par

In section \ref{sec3} we study the stability of $M_{E,V}$ when ${M_{E,V}}_{\vert_{C_i}} \simeq M_{E_i,V_i}$. We first show that this is equivalent to the following condition: 
$$
(\star) \qquad \qquad \qquad \qquad V \cap H^0(E_1(-p))= V \cap H^0(E_2(-p)) = \{ 0 \}.\quad\qquad  \qquad \qquad \qquad \qquad$$
Then, we prove Theorem \ref{THM:wstab}:

\begin{theorem*}
Let $C$ be  a nodal curve as above. 
Let $(E,V)$ be a generated pair  on the  curve $C$  satisfying condition $(\star)$. If both $M_{E_1,V_1}$ and $M_{E_2,V_2}$ are semistable then there exists a polarization $w$ such that $M_{E,V}$ is $w$-semistable.
\end{theorem*}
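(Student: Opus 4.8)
The strategy is to read off $w$ from the numerical invariants of $M:=M_{E,V}$ and of its restrictions. Write $M_i:=M_{E_i,V_i}$ and $n:=k-r\geq 1$. By condition $(\star)$ (see the discussion preceding the statement) we have $M_{\vert C_i}\simeq M_i$, so $M$ is a vector bundle of rank $n$ on each component whose restriction to $C_i$ is the semistable bundle $M_i$; moreover $M\hookrightarrow M_{\vert C_1}\oplus M_{\vert C_2}$ with cokernel a length-$n$ skyscraper at the node $p$. From $V_i\subseteq H^0(E_i)$ and the defining sequence of $M_i$ one gets $H^0(M_i)=0$, hence $\chi_i:=\chi(M_i)\leq 0$; set $\nu_i:=\chi_i/n\leq 0$ and $\nu:=\nu_1+\nu_2-1$, so that $\nu\leq -1<0$. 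Recall from Definition \ref{DEF:wSTAB} that, normalizing $w=(w_1,w_2)$ by $w_1+w_2=1$, the $w$-slope of a sheaf of rank $(t_1,t_2)$ is $\chi/(w_1t_1+w_2t_2)$; in particular $\mu_w(M)=\chi(M)/n=\nu$ for every $w$. I will show that $M$ is $w$-semistable for every $w$ with
\[
\frac{-\nu_1}{|\nu|}\ \leq\ w_1\ \leq\ \frac{1-\nu_1}{|\nu|},
\]
a constraint compatible with $w_1\in(0,1)$: indeed $0\leq -\nu_1/|\nu|<1$ (the last inequality being $\nu_2<1$) and $-\nu_1/|\nu|<(1-\nu_1)/|\nu|$, all because $\nu_1,\nu_2\leq 0$.

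Fix such a $w$ and a nonzero subsheaf $G\subsetneq M$ of rank $(t_1,t_2)$. If $t_1=t_2=n$ then $\chi(G)<\chi(M)$ and $\mu_w(G)=\chi(G)/n<\mu_w(M)$, so we may assume $t_i<n$ for some $i$. Let $G_i\subseteq M_i$ be the saturation of the image of $G$ under $M\to M_{\vert C_i}\simeq M_i$: it is a subbundle of rank $t_i$, and $G\subseteq G':=M\cap(G_1\oplus G_2)$, where $G'/G$ is torsion and $G'$ has the same rank $(t_1,t_2)$. As $\chi(G)\leq\chi(G')$ and the two slopes have the same denominator, it suffices to bound $\mu_w(G')$; so we may assume $G=M\cap(G_1\oplus G_2)$. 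The cokernel $T$ of $G\hookrightarrow G_1\oplus G_2$ is a skyscraper at $p$: writing $M$ near $p$ as the graph of the gluing isomorphism $\phi\colon(M_2)_p\to(M_1)_p$, the cokernel of $M\hookrightarrow M_{\vert C_1}\oplus M_{\vert C_2}$ is identified with $(M_1)_p$ via $(a,b)\mapsto a-\phi(b)$, and under this $T$ becomes $W_1+\phi(W_2)$, with $W_i\subseteq(M_i)_p$ the $t_i$-dimensional image of the fibre $(G_i)_p$ (here we use that $G_i$ is a subbundle of $M_i$). Hence $\ell(T):=\mathrm{length}(T)=\dim\bigl(W_1+\phi(W_2)\bigr)\geq\max(t_1,t_2)$.

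Semistability of $M_i$ gives $\deg G_i\leq(t_i/n)\deg M_i$, hence $\chi(G_i)\leq(t_i/n)\chi_i=t_i\nu_i$, and therefore
\[
\chi(G)=\chi(G_1)+\chi(G_2)-\ell(T)\ \leq\ t_1\nu_1+t_2\nu_2-\max(t_1,t_2).
\]
Put $A:=\nu_1-w_1\nu$, so that $w_1\nu=\nu_1-A$ and $w_2\nu=\nu_2-1+A$. Substituting these into $\nu(w_1t_1+w_2t_2)$, the inequality $\chi(G)\leq\nu(w_1t_1+w_2t_2)$, that is $\mu_w(G)\leq\mu_w(M)$, follows once
\[
t_2+A(t_1-t_2)\ \leq\ \max(t_1,t_2),
\]
and a one-line case distinction ($t_1\geq t_2$ versus $t_1\leq t_2$) shows this holds for all $t_1,t_2\geq 0$ as soon as $0\leq A\leq 1$. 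Since $\nu<0$, the condition $0\leq A\leq 1$ is exactly $-\nu_1/|\nu|\leq w_1\leq(1-\nu_1)/|\nu|$, which is how $w$ was chosen. Hence $\mu_w(G)\leq\mu_w(M)$ for every proper nonzero subsheaf $G$, i.e.\ $M$ is $w$-semistable.

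The decisive step, and the only place where the nodal geometry really enters, is the node correction: the exact sequence $0\to G\to G_1\oplus G_2\to T\to 0$ together with $\ell(T)\geq\max(t_1,t_2)$, for which the preliminary passage to saturated $G_i$ (so that $\dim W_i=t_i$) is essential. Without the term $-\max(t_1,t_2)$ the bound $\chi(G)\leq t_1\nu_1+t_2\nu_2$ is too weak to ensure $\mu_w(G)\leq\mu_w(M)$ for any polarization — already the subsheaves of ranks $(n,0)$ and $(0,n)$, for instance $M_{\vert C_1}(-p)$ and $M_{\vert C_2}(-p)$, would obstruct it — so it is precisely this correction that pins the slope inequality down to the nonempty range of $w$ found above. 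The remaining ingredients are the additivity of $\chi$, the semistability of $M_1$ and $M_2$, and the elementary inequality displayed above.
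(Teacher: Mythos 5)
Your proof is correct, and it arrives at exactly the same numerical answer as the paper but by a genuinely more self-contained route. The paper's proof is short: it invokes Teixidor's criterion (Theorem \ref{stabilityconditions}(2)) — semistable restrictions plus the numerical condition \eqref{COND:TEIXI} imply $w$-semistability — and then Lemma \ref{lemmatecnico2}(2), which exhibits the admissible interval $(a_1,b_1)$ for $w_1$. Your interval $\bigl[-\nu_1/|\nu|,\,(1-\nu_1)/|\nu|\bigr]$ is precisely $[a_1,b_1]$ (clear denominators by $n=k-r$ and use $\chi(M_{E_1,V_1})=-\bigl((k-r)(g_1-1)+d_1\bigr)$), and your condition $0\leq A\leq 1$ is exactly condition \eqref{COND:TEIXI} written for $M_{E,V}$. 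What you do differently is to \emph{reprove} the sufficiency of that condition from scratch rather than cite it: the reduction to saturated $G_i$, the exact sequence $0\to G\to G_1\oplus G_2\to T\to 0$, and the node correction $\ell(T)\geq\max(t_1,t_2)$ constitute the standard proof of Teixidor's criterion in this two-component case, and all the individual steps check out (in particular $H^0(M_{E_i,V_i})=\Ker(V_i\hookrightarrow H^0(E_i))=0$ gives $\chi_i\leq 0$, which is what makes the interval land inside $(0,1)$). What the paper's approach buys is brevity and a clean separation between the numerical existence statement (Lemma \ref{lemmatecnico2}) and the general gluing criterion; what your approach buys is independence from the cited literature and an explicit view of where the inequality can fail (the rank-$(n,0)$ and $(0,n)$ subsheaves), which is exactly the mechanism behind the instability results of Section \ref{sec2}. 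Two small polish points: you should note that $w_1$ can be chosen \emph{rational} in the interior of your interval (the definition of polarization requires $w_i\in\Q$), and the strict inequality $\chi(G)<\chi(M)$ in the full-rank case is more than you need — $\leq$ suffices for semistability.
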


We conclude Section \ref{sec3} by applying the above theorem to special cases where we know semistability of the  kernel bundles $M_{E_i,V_i}$. The results are stated in Theorem \ref{THM:MAIN2} and \ref{THM:3}. 
\\ \hfill\par

{\bf Acknowledgements}. The authors want to express their gratitude to the anonymous referee for helpful remarks and to Prof. P.E. Newstead for his keen suggestions which contributed to the final version of this paper. 

\section{Vector bundles on nodal reducible curves}
\label{sec1}
 Let $C$ be a complex projective curve with two smooth irreducible  components and one single node   $p$, i.e. $p$ is an ordinary double point. 
Assume that  
$$ \nu \colon C_1 \sqcup C_2 \to C$$ is a  normalization map for $C$, where $C_i$ is a smooth irreducible projective curve of genus $g_i \geq 2$.  
Then ${\nu}^{-1}(x)$ is a single point except when $x$ is the node,   in the latter  case  we have: ${\nu}^{-1}(p) = \{ q_{1}, q_{2} \}$ with $q_{i} \in C_i$.
Since for $i = 1,2$ the restriction 
$$\nu_{\vert C_i} \colon C_i \to \nu(C_i)$$ is an isomorphism, we will call as well $C_1$ and $C_2$ the components of $C$.
As $C$ is of compact type, i.e. every node of $C$ disconnects
the curve, the pull back map $\nu^*$ induces an isomorphism 
$$\Pic (C) \simeq \Pic(C_1 \sqcup C_2)\simeq \Pic(C_1)\times  \Pic(C_2).$$
Moreover, the curve $C$ is contained in a smooth irreducible projective surface $X$. On it, $C$, $C_1$ and $C_2$ are effective divisors and we have: $C = C_1 + C_2$  with $C_1\cdot C_2 = 1$.   In particular $C$ is $1$-connected, so  the sheaf  $\omega_C=\omega_X\otimes \OO_C(C)$ is a  dualizing sheaf  for $C$ and   we have $h^1(C,\omega_C)= 1$, see \cite{FT}. 

Moreover, for $i,j\in\{1,2\}$ with $i\neq j$, we have an exact sequence
\begin{equation}
\label{decomposition}
\xymatrix{
0\ar[r] &
\OO_{C_i}(-C_j)\ar[r] &
\OO_C\ar[r] & 
\OO_{C_j}\ar[r] & 0
}
\end{equation}
from which we get
$$\chi(\OO_C)= \chi(\OO_{C_i}(-C_j)) + \chi(\OO_{C_j})=\chi(\OO_{C_i}(-p)) + \chi(\OO_{C_j}).$$
Let $p_a(C) = 1 - \chi(\OO_C)$ be the arithmetic genus of $C$, then we have:
$$p_a(C) = g_1+g_2.$$
Let  $\overline{\mathcal M}_g$ be the moduli space of stable curves of arithmetic genus $g$, it is a projective integral scheme containing as a dense subset the moduli space 
${\mathcal M}_g$ of smooth curves of genus $g$. The closure in $\overline{\mathcal M}_g$ of the locus of curves with a single node has codimension $1$ and is the union of finitely many irreducible divisors $\Delta_i$, where $\Delta_0$ parametrizes irreducible nodal curves whereas, for $0<i\leq\left[g/2\right]$, $\Delta_i$ parametrizes reducible nodal curves with two smooth irreducible components of genus $i$ and $g-i$, see \cite{DM} and \cite{Gie}. Let $C$ be a nodal curve as above with $p_a(C)= g_1 + g_2$ and assume $g_1\leq g_2$. We will say that $C$ is a general reducible nodal curve if it is a general element of the divisor $\Delta_{g_1}$. In this case, both of its components are general in their moduli spaces.
\\ \hfill\par

To talk about semistability for vector bundles on  a reducible curve, we first need to introduce the notions of depth  one sheaves following \cite{S}. 

\begin{definition}
A coherent sheaf $E$ on $C$ is of depth one if for any point $x \in C$ the stalk $E_x$ is a $\OO_{C,x}$-module of depth one, i.e. there exists an element in the maximal ideal  ${m}_{x}$ of $ \OO_{C,x}$ which is not a zero divisor for $E_x$. 
\end{definition}
Any vector bundle  on $C$ is a sheaf of depth one,  any subsheaf of a vector bundle too.  If $E$ is a sheaf of depth one, then its restriction to $C_i$ is a torsion free sheaf on $C_i \setminus p$. 
In particular, let $j_i \colon C_i \hookrightarrow C$ be the natural inclusion, if $F_i$ is a vector bundle on  $C_i$, then the sheaf $(j_i)_*(F_i)$ is a depth one sheaf on $C$ and 
$\chi(C_i,F_i) = \chi(C,(j_i)_*(F_i))$. 
In the sequel, when no confusion arises we will denote with 
$F_i$ on $C$ the sheaf $(j_i)_*(F_i)$.
\hfill\par
Let $E$ be a depth one sheaf on $C$, let $E_i$ denote the restriction on the component $C_i$ modulo torsion. We define the {\it relative rank }  and the {\it relative degree} of $E$ with respect to the component $C_i$ as follows:
$$r_i = \Rk (E_i),  \quad d_i = deg(E_i) = \chi(E_i) - r_i \chi(O_{C_i}),$$
where $\chi(E_i)$ is the Euler characteristic of $E_i$. We say that $E$ has  {\it multirank} $(r_1,r_2)$ and {\it multidegree} $(d_1,d_2)$. 
For a vector bundle $E$, the restriction $E_i$ is a  vector bundles too and  we have $r_1= r_2= r$,  we denote $r$ as the {\it rank }  of $E$. 

\begin{definition}
A polarization $w$ of $C$ is given  by a pair of rational weights $(w_1,w_2)$ such that $0 < w_i <1$ and $w_1 + w_2 = 1$. For any sheaf $E$ of depth one on $C$ of multirank $(r_1,r_2)$ we define the polarized slope as 
$$ \mu_{w}(E) = \frac{\chi(E)}{w_1r_1 + w_2r_2}.$$
\end{definition}

\begin{definition} 
\label{DEF:wSTAB}
A vector bundle $E$ on $C$ is called $w$-semistable if for any proper subsheaf $F \subset E$ we have $\mu_w(F) \leq \mu_w(E)$; it is said to be $w$-stable if $\mu_w(F) < \mu_w(E)$.
\end{definition}

If $w$ and $w'$ are two polarizations, it can happen that a depth one sheaf $E$ is $w$-semistable and it is not $w'$-semistable. 

\begin{definition}
\label{DEF:STRUNST}
Let $C$ be as above. We will say that a vector bundle $E$ on $C$ is {\it strongly unstable} if it is $w$-unstable with respect to any polarization on $C$.
\end{definition}

We will need the following result, see \cites{T1, T2}. 
\begin{theorem}
\label{stabilityconditions}
Let $C$ be a nodal curve with two smooth irreducible components $C_i$ and a single node. Let $E$ be a vector bundle on $C$ of rank $r \geq 1$ with restrictions $E_i$, $i = 1,2$ and $w$ a polarization. 
Then we have the following properties:
\begin{enumerate}
\item{} if $E$ is $w-$semistable then:
\begin{equation}
\label{COND:TEIXI}
w_i \chi(E) \leq \chi(E_i) \leq w_i \chi(E) + r;
\end{equation}
\item{} if $E_1$  and $E_2$ are  semistable and $E$ satisfies the above condition, then $E$ is $w$-semistable. Moreover, if at least one of the restrictions is stable, then $E$ is $w$-stable too.
\end{enumerate}
\end{theorem}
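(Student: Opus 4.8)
The plan is to prove the two implications by playing with the two exact sequences coming from \eqref{decomposition}. Tensoring \eqref{decomposition} with the locally free sheaf $E$ gives, for $\{i,j\}=\{1,2\}$, the sequences $0\to E_i(-p)\to E\to E_j\to 0$, in which $E_i(-p)$ denotes the subsheaf $(j_i)_*\big(E_i\otimes\OO_{C_i}(-q_i)\big)$ of $E$, of multirank $(r,0)$ (resp.\ $(0,r)$) and Euler characteristic $\chi(E_i)-r$; comparing Euler characteristics gives the identity $\chi(E_1)+\chi(E_2)=\chi(E)+r$, used throughout, and more generally $\chi(G)=\chi(G_1)+\chi(G_2)-t_G$ for any depth one sheaf $G$, where $G_i$ is the restriction of $G$ to $C_i$ modulo torsion and $t_G\ge 0$ counts the copies of $\OO_{C,p}$ in the decomposition of the stalk $G_p$. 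For part (1), if $E$ is $w$-semistable then testing the definition on $E_1(-p)\subset E$ gives $\mu_w(E_1(-p))=\frac{\chi(E_1)-r}{w_1r}\le\frac{\chi(E)}{r}=\mu_w(E)$, i.e.\ $\chi(E_1)\le w_1\chi(E)+r$, and symmetrically $\chi(E_2)\le w_2\chi(E)+r$; feeding the second inequality into the identity above yields $\chi(E_1)\ge(1-w_2)\chi(E)=w_1\chi(E)$, and likewise for $E_2$, which is exactly \eqref{COND:TEIXI}.

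For part (2), let $F\subset E$ be a nonzero proper subsheaf of multirank $(s_1,s_2)$, $0\le s_i\le r$. Since replacing $F$ by its saturation in $E$ preserves the multirank and can only raise $\chi(F)$, hence $\mu_w(F)$, I may assume $Q:=E/F$ is of depth one; it has multirank $(r-s_1,r-s_2)$. Then $E_i$ surjects onto the restriction $Q_i$ with kernel a subbundle $F_i\subseteq E_i$ of rank $s_i$, and combining $0\to F_i\to E_i\to Q_i\to 0$ with the two additivity formulas recalled above gives
\[
\chi(F)=\chi(F_1)+\chi(F_2)-r+t,\qquad 0\le t\le\min(r-s_1,\,r-s_2),
\]
where $t=t_Q$. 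Semistability of $E_i$ gives $\chi(F_i)\le\frac{s_i}{r}\chi(E_i)$ (trivial when $s_i=0$). Writing $\chi(E_i)=w_i\chi(E)+r+\varepsilon_i$, condition \eqref{COND:TEIXI} reads $\varepsilon_i\le 0$, while the identity $\chi(E_1)+\chi(E_2)=\chi(E)+r$ reads $\varepsilon_1+\varepsilon_2=-r$. Substituting everything into the displayed equality, using the bound on $t$, and taking by symmetry $s_1\ge s_2$ (so $t\le r-s_1$), the desired inequality $\mu_w(F)\le\mu_w(E)$, that is $r\,\chi(F)\le(w_1s_1+w_2s_2)\chi(E)$, reduces by a short direct computation to $(s_1-s_2)\,\varepsilon_1\le 0$, which holds since $s_1\ge s_2$ and $\varepsilon_1\le 0$. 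Hence $E$ is $w$-semistable; and if one of the $E_i$ is stable, running the same estimates with $\chi(F_i)<\frac{s_i}{r}\chi(E_i)$ strict whenever $0<s_i<r$ (and inspecting directly the subsheaves with some $s_i\in\{0,r\}$) upgrades the conclusion to $\mu_w(F)<\mu_w(E)$.

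The only non-formal point is this last computation in part (2): the crude bound $\chi(F)\le\chi(F_1)+\chi(F_2)$ is insufficient — against \eqref{COND:TEIXI} it points the wrong way — so one genuinely needs the exact relation $\chi(F)=\chi(F_1)+\chi(F_2)-r+t$ valid for saturated $F$ together with the estimate $t\le\min(r-s_1,r-s_2)$, after which the algebra closes up precisely because $\varepsilon_1+\varepsilon_2=-r$. I expect that, together with setting up the correct subbundles $F_i$ and reading off their multiranks, to be where the work lies; the remainder is bookkeeping with multiranks, restrictions, and Euler characteristics.
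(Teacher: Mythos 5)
The paper does not prove this theorem at all: it is quoted from Teixidor's work \cite{T1}, \cite{T2}, so there is no internal proof to measure your attempt against. On its own merits, your argument for part (1) and for the semistability assertion in part (2) is correct and is essentially the standard one. Testing $w$-semistability against the subsheaves $E_i(-p)=\ker(E\to E_j)$, together with the identity $\chi(E_1)+\chi(E_2)=\chi(E)+r$, gives (1); for (2), the three ingredients you isolate --- reduction to saturated $F$, Seshadri's local decomposition $G_p\simeq\OO_{C,p}^{t_G}\oplus\OO_{C_1,p}^{b_1}\oplus\OO_{C_2,p}^{b_2}$ of a depth one stalk at the node with its consequence $\chi(G)=\chi(G_1)+\chi(G_2)-t_G$, and the bound $t_Q\le\min(r-s_1,r-s_2)$ --- are exactly what is needed, and I have checked that your final reduction to $(s_1-s_2)\,\varepsilon_1\le 0$ is correct.

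The gap is in the closing sentence on $w$-stability: the cases $s_i\in\{0,r\}$ that you propose to ``inspect directly'' do not close, and no inspection can close them, because the ``moreover'' clause is false as stated once equality is allowed in \eqref{COND:TEIXI}. Concretely, take $r=2$, $w=(1/2,1/2)$, $E_1$ stable with $\chi(E_1)=0$, and $E_2=L\oplus L$ with $\chi(L)=1$, glued at the node to form $E$ with $\chi(E)=0$; then \eqref{COND:TEIXI} holds (with equality $\chi(E_2)=w_2\chi(E)+r$), one restriction is stable, and yet $F=(j_2)_*\bigl(L\otimes\OO_{C_2}(-q_2)\bigr)\subset\ker(E\to E_1)\subset E$ has multirank $(0,1)$ and $\chi(F)=0$, whence $\mu_w(F)=0=\mu_w(E)$ and $E$ is not $w$-stable. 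In your bookkeeping this is exactly the configuration $s_1=0$, $t=r-s_2$, $\varepsilon_2=0$, $\chi(F_2)=\tfrac{s_2}{r}\chi(E_2)$, in which every inequality you use becomes an equality. Teixidor's stability criterion in fact requires the inequalities \eqref{COND:TEIXI} to be strict; with that added hypothesis your scheme does deliver $w$-stability (strictness of $\chi(F_i)<\tfrac{s_i}{r}\chi(E_i)$ on the stable side handles $0<s_1=s_2<r$, strictness of the $\varepsilon_j$ handles $s_i\in\{0,r\}$, and $(s_1-s_2)\varepsilon_1<0$ handles $s_1\neq s_2$). You should flag this correction rather than assert the upgrade as stated.
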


\section{Strongly unstable kernel bundles}
\label{sec2}
Let $C$ be a nodal reducible curve with two smooth irreducible components $C_i$ of genus $g_i \geq 2$ and  a single node  $p$ as in Section \ref{sec1}.
Let $(E,V)$ be a generated pair  on $C$, with $E$  a vector bundle of rank $r\geq 1$ on $C$ and $V \subseteq H^0(E)$ of dimension $k \geq r+1$.  Consider the kernel bundle $M_{E,V}$ associated to the pair $(E, V)$. It is a vector bundle of rank $k -r \geq 1$ and Euler-characteristic  
$$\chi (M_{E,V}) =  k (1- p_a(C)) - \chi(E).$$ 
Let $E_i$ be the restriction of $E$ to the component $C_i$ and let $d_i$ be the degree of $E_i$.
Let 
\begin{equation}
\rho_i \colon H^0(E) \to H^0(E_i)
\end{equation} 
be the restriction map of global sections of $E$ to the component $C_i$ and let's consider its image
$$V_i = \rho_i(V).$$
We have a pair $(E_i,V_i)$ on the curve $C_i$. This pair is said to be of type
$(r,d_i,k_i)$ as $r=\Rk(E_i)$, $d_i$ is the degree of $E_i$ and $k_i = \dim V_i$. We will denote $(E_i,V_i)$ as the restriction of the pair $(E,V)$ to the curve $C_i$. 
\\ \hfill\par

We have the following lemmas:

\begin{lemma}
\label{lemmatecnico0}
Let $C$ be  a nodal reducible curve as in Section \ref{sec1}.
Let $(E,V)$ be a  generated pair on $C$. Then
\begin{enumerate}
\item $(E_i,V_i)$ is a generated pair on the curve $C_i$ and $k_i\geq r$, $i = 1,2$;
\item $h^0(E) = h^0(E_1) + h^0(E_2) -r$;
\item{} the restriction map $\rho_i \colon H^0(E) \to H^0(E_i)$ is surjective, for $i = 1,2$. 
\end{enumerate}
\end{lemma}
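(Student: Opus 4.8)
The plan is to work with the exact sequence \eqref{decomposition} tensored by $E$, which reads, for $i \neq j$,
\[
0 \to E_i(-p) \to E \to E_j \to 0,
\]
where I write $E_i(-p)$ for $(j_i)_*(E_i(-q_i))$ and use that $\OO_{C_i}(-C_j)\otimes E$ restricts to $E_i(-q_i)$ on $C_i$. First I would establish part (3): the connecting map in the long exact sequence in cohomology lands in $H^1(E_i(-p))$, and since $C$ has compact type with $E_i$ a vector bundle on a smooth curve, I would argue that $\rho_j \colon H^0(E) \to H^0(E_j)$ is surjective. The cleanest route is to note that a section $s_j \in H^0(E_j)$ of the (globally generated, by part (1) — see below) bundle $E$ restricted to $C_j$ extends: one uses that $E$ is globally generated on all of $C$, so in particular at the node, and a local computation at $p$ shows that a section on $C_j$ can be lifted to a section on $C$ after possibly modifying by something supported away from $C_i$; alternatively, surjectivity of $\rho_j$ is equivalent to the vanishing of the image of the connecting homomorphism $H^0(E_j) \to H^1(E_i(-p))$, and this image is forced to be zero because $H^0(E) \to H^0(E_1)\oplus H^0(E_2)$ has image exactly the pairs of sections agreeing at the node (as $\Pic(C) = \Pic(C_1)\times\Pic(C_2)$ and sections of $E$ on $C$ are pairs of sections of $E_i$ with matching value in the fiber $E_p$).

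Granting (3), part (2) is then immediate from the long exact sequence: $h^0(E) = h^0(E_i(-p)) + h^0(E_j)$ when $\rho_j$ is surjective, and $h^0(E_i(-p)) = h^0(E_i) - r$ because global generation of $E$ forces the evaluation $E_i \to E_p$ (fiber at $q_i$) to be surjective, hence $h^0(E_i(-q_i)) = h^0(E_i)-r$. For part (1), I would argue that $(E_i,V_i)$ is generated as follows: for any $x \in C_i \setminus \{p\}$ the fiber $E_x = (E_i)_x$ is generated by the image of $V$ under $\ev_{V,x}$, and this image factors through $V_i = \rho_i(V)$ since the evaluation at a point of $C_i$ only sees the restriction to $C_i$; at $x = p$, global generation of $E$ at the node gives surjectivity onto $E_p$, which surjects onto the fiber $(E_i)_{q_i}$, and again this factors through $V_i$. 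The inequality $k_i = \dim V_i \geq r$ follows because $V_i$ generates the rank-$r$ bundle $E_i$ at a general point, so the evaluation $V_i \otimes \OO_{C_i} \to E_i$ is generically surjective, forcing $\dim V_i \geq r$.

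I expect the main obstacle to be the surjectivity statement (3), and correspondingly the identity $h^0(E_i(-p)) = h^0(E_i)-r$ needed in (2): both hinge on controlling the behaviour of sections at the node $p$. The key input is that a depth-one sheaf (in particular a vector bundle) on $C$ has $H^0(C,E)$ equal to the fibered product $H^0(C_1,E_1) \times_{E_p} H^0(C_2,E_2)$ over the common fiber at the node — a standard consequence of the gluing description of sheaves on nodal curves — and that global generation of $E$ at $p$ makes the two "evaluation at the node" maps $H^0(E_i) \to E_p$ jointly surjective. Once this fibered-product description is in place, surjectivity of each $\rho_i$, the count in (2), and the generation in (1) all follow by diagram chasing; I would present the node-local analysis carefully since that is where the compact-type hypothesis and the global generation hypothesis are genuinely used.
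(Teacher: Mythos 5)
Your proposal is correct, and it rests on the same two inputs as the paper's own proof --- the restriction sequence $0 \to E_j(-p) \to E \to E_i \to 0$ obtained by tensoring \ref{decomposition} with $E$, and the gluing (fibered-product) description $0 \to E \to E_1 \oplus E_2 \to E_{2,p} \to 0$ coming from the triple $(E_1,E_2,\sigma)$ --- but it organizes the deductions in the opposite order. The paper proves (1) first, by tensoring the defining sequence of $M_{E,V}$ with $\OO_{C_i}$ (using $\Tor^1_{\OO_C}(\OO_{C_i},E)=0$) and chasing diagram \ref{CD1}; it then gets (2) from the cohomology of the gluing sequence, where surjectivity onto $E_{2,p}$ follows from global generation of the restrictions; and finally it deduces (3) by a rank count, $\Rk \rho_i = h^0(E)-h^0(E_j(-p)) = h^0(E_i)$, which uses (2) together with $h^0(E_j(-p))=h^0(E_j)-r$. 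You instead prove (3) directly by lifting a section of $E_i$ to $E$ via a matching section of $E_j$ with prescribed value at the node --- note this requires the evaluation $H^0(E_j)\to E_{j,p}$ to be surjective \emph{individually}, not merely the two node evaluations jointly, but you do have this since global generation of $E$ at $p$ forces each $H^0(E_i)\to E_{i,p}$ to be onto --- then read off (2) from the long exact sequence, and handle (1) by a fiberwise evaluation argument. Both routes are sound; yours is more hands-on at the node and avoids the Tor computation, while the paper's order makes the diagram \ref{mainCD} used in Lemma \ref{lemmatecnico1} fall out for free. One cosmetic point: the isomorphism $\Pic(C)\simeq \Pic(C_1)\times\Pic(C_2)$ is not what justifies the fibered-product description of $H^0(E)$; that comes from the gluing presentation of $E$ itself, which you also (and correctly) cite.
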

\begin{proof}
(1) Consider the exact sequence \ref{DSB} defining $M_{E,V}$. Since $\Tor^1_{\OO_C}(\OO_{C_i},E) = 0$ by tensoring the exact sequence with $\OO_{C_i}$ we get the following exact sequence of locally free sheaves on $C_i$: 
$$
\xymatrix{
 0\ar[r] & M_{E,V} \otimes O_{C_i} \ar[r] & V \otimes \OO_{C_i}\ar[r]^-{{\ev_{V}}_{\vert{ C_i} }}& E_i\ar[r] & 0.
}
$$
We have the following commutative diagram
\begin{equation}
\label{CD1}
\xymatrix{
0 \ar[r] & M_{E,V} \otimes \OO_{C_i} \ar[r]  &
V \otimes \OO_{C_i} \ar[r]^-{{ev_V}_{\vert C_i}} \ar[d]^-{{\rho_i}_{\vert V}
} & E_i \ar[r] \ar@{=}[d] & 0  \\
 &  & V_i \otimes \OO_{C_i} \ar[r]_-{{ev_{V_i}}} & E_i  }
\end{equation}
From it we deduce that $ev_{V_i}$ is surjective and thus, $(E_i,V_i)$ is a generated pair. In particular, $k_i=\dim(V_i)\geq r$.
\\ \hfill\par

(2) Following \cite{S}, the vector bundle $E$ on the curve $C$ is
obtained by gluing  the vector bundles $E_1$ and $E_2$ along the fibers at the node. More precisely $E$  is determined  by the   triple $(E_1,E_2,\sigma)$, where $\sigma \in \GL(E_{1,p},E_{2,p})$  is an isomorphism.
In particular, $\sigma$ induces  the following  commutative diagram, (see \cite{BF2} for more details)
\begin{equation}
\label{EQ:OLDPAPER}
\xymatrix{
0 \ar[r] &  E    \ar[r] &  
E_1 \oplus  E_2   \ar[r]  \ar[d]^{{\rho}_{1,p}\oplus{\rho}_{2,p}} & E_{2,p}  \ar[r]  \ar@{=}[d]   & 0  \\
 &  & E_{1,p}  \oplus E_{2,p} \ar[r]_-{\delta} & E_{2,p} \ar[r]  & 0 }
\end{equation}
where $\rho_{i,p}$ is the restriction map to the fiber at $p$ and 
$\delta(u,v) = \sigma(u)-v$. 
Since by (1) $E_i$ is globally generated, passing to cohomology we obtain the exact sequence
$$ 0 \to H^0(E) \to H^0(E_1) \oplus H^0(E_2) \to E_{2,p} \to 0,$$
which proves (2).
\\ \hfill\par

(3) Let $i,j \in \{1,2\}$ with $i\neq j$. If we tensor the exact sequence \ref{decomposition} with $E$, we obtain
$$0 \to E_j(-p) \to E \to E_i \to 0.$$
Passing to cohomology we have:
$$ \xymatrix{
0 \ar[r] & H^0(E_j(-p)) \ar[r] &  H^0(E) \ar[r]^{\rho_i} \ar[r]&  H^0(E_i) \ar[r]  & ...}$$
from which we deduce that  $\Ker \rho_i \simeq H^0(E_j(-p))$. Since $E_j$ is globally generated   we have:
$$\Rk \rho_i = h^0(E) - h^0(E_j(-p)) = 
h^0(E) - h^0(E_j) + r = h^0(E_i).$$
\end{proof}

\begin{lemma}
\label{lemmatecnico1}
Let $C$ be  a nodal reducible curve as in Section \ref{sec1}.
Let $(E,V)$ be a  generated pair on $C$. If $E_i$ is semistable and $\dim (V \cap H^0(E_i(-p))) \geq 1$, $i = 1,2$, then we have:
\begin{enumerate}
\item{} $(E_i,V_i)$ is of type $(r,d_i,k_i)$, with  $d_i  \geq r$ and $k_i \leq k-1$;
\item{} the kernel bundle $M_{E_i,V_i}$ is a non trivial  quotient of the  restriction $M_{E,V} \otimes \OO_{C_i}$;
\item{}  the restriction $M_{E,V} \otimes \OO_{C_i}$ is an unstable vector bundle on $C_i$. 
\end{enumerate}
\end{lemma}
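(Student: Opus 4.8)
The plan is to restrict the pair $(E,V)$ to the component $C_i$ and to compare the kernel bundle $M_{E_i,V_i}$ with the restriction $M_{E,V}\otimes\OO_{C_i}$ by means of the snake lemma applied to the commutative diagram \eqref{CD1}. I would prove item (1) first, then deduce items (2) and (3) from a single exact sequence.

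For (1): Lemma \ref{lemmatecnico0} already tells us that $(E_i,V_i)$ is a generated pair and that $k_i\geq r$. From Lemma \ref{lemmatecnico0}(3) one has $\Ker(\rho_i)\simeq H^0(E_j(-p))$, with $\{i,j\}=\{1,2\}$, hence $\Ker(\rho_i|_V)=V\cap H^0(E_j(-p))$; the hypothesis applied to the index $j$ gives $\dim\bigl(V\cap H^0(E_j(-p))\bigr)\geq 1$, so $k_i=k-\dim\Ker(\rho_i|_V)\leq k-1$. For the inequality $d_i\geq r$ I would argue that the hypothesis applied to the index $i$ forces $H^0(E_i(-p))\neq 0$, hence $E_i\not\simeq\OO_{C_i}^{\oplus r}$; since $E_i$ is moreover semistable and globally generated on a curve of genus $g_i\geq 2$, the classical fact that a semistable globally generated bundle on a curve of positive genus is either trivial or has slope at least $1$ yields $d_i\geq r$. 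I would also record here, for use in (2), that a nonzero $s\in V\cap H^0(E_i(-p))$ has $s_i:=\rho_i(s)\in V_i$ nonzero and vanishing at $q_i$ (it cannot vanish on $C_i$ as well, otherwise $s\equiv 0$), so that $\ev_{V_i,q_i}$ is surjective with $s_i$ in its kernel and therefore $k_i\geq r+1$.

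For (2) and (3): the surjection $\rho_i|_V\colon V\twoheadrightarrow V_i$ induces a surjection $V\otimes\OO_{C_i}\to V_i\otimes\OO_{C_i}$ which, by \eqref{CD1}, is compatible with the two evaluation maps onto $E_i$. Applying the snake lemma to $0\to M_{E,V}\otimes\OO_{C_i}\to V\otimes\OO_{C_i}\to E_i\to 0$ and $0\to M_{E_i,V_i}\to V_i\otimes\OO_{C_i}\to E_i\to 0$ then produces the exact sequence
\[
0\longrightarrow\bigl(V\cap H^0(E_j(-p))\bigr)\otimes\OO_{C_i}\longrightarrow M_{E,V}\otimes\OO_{C_i}\longrightarrow M_{E_i,V_i}\longrightarrow 0 .
\]
The kernel is a nonzero trivial bundle (its rank is $\dim\bigl(V\cap H^0(E_j(-p))\bigr)\geq 1$) and $M_{E_i,V_i}$ has rank $k_i-r\geq 1$, so $M_{E_i,V_i}$ is a non-trivial quotient of $M_{E,V}\otimes\OO_{C_i}$, which is (2). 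For (3), the trivial subsheaf $\bigl(V\cap H^0(E_j(-p))\bigr)\otimes\OO_{C_i}\subset M_{E,V}\otimes\OO_{C_i}$ has slope $0$, whereas $\mu\bigl(M_{E,V}\otimes\OO_{C_i}\bigr)=\deg\bigl(M_{E,V}\otimes\OO_{C_i}\bigr)/(k-r)=-d_i/(k-r)<0$ because $d_i\geq r>0$ by (1); hence $M_{E,V}\otimes\OO_{C_i}$ has a subsheaf of strictly larger slope and is unstable.

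The diagram chase is routine, so the only real content — and the main obstacle — is the inequality $d_i\geq r$ in step (1). Everything downstream (in particular the instability in (3), where the destabilizing subsheaf is forced to be the trivial one of slope $0$) needs only $d_i>0$, but the sharp bound is where the hypotheses genuinely bite: the condition $V\cap H^0(E_i(-p))\neq 0$ is exactly what rules out $E_i\simeq\OO_{C_i}^{\oplus r}$, and combined with semistability, global generation, and $g_i\geq 2$ it forces $d_i\geq r$. I would also be careful with the index bookkeeping, since the kernel of the restriction to $C_i$ is $H^0$ of the bundle twisted down on the \emph{other} component $C_j$, so the hypothesis for the index $j$ is what controls $k_i$, while the hypothesis for the index $i$ is what excludes triviality of $E_i$.
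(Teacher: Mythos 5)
Your proof is correct and follows essentially the same route as the paper: the same comparison of the two evaluation sequences via diagram \ref{CD1} (the paper completes it to a nine-term diagram rather than invoking the snake lemma by name), the same resulting exact sequence $0\to \bigl(V\cap H^0(E_j(-p))\bigr)\otimes\OO_{C_i}\to M_{E,V}\otimes\OO_{C_i}\to M_{E_i,V_i}\to 0$, and the same trivial destabilizing subsheaf of slope $0$ against $\mu(M_{E,V}\otimes\OO_{C_i})=-d_i/(k-r)<0$. The only cosmetic difference is the step $d_i\geq r$: the paper argues directly with the zero divisor of a nonzero section of $E_i(-p)$ and semistability, whereas you package the very same computation as the classical fact that a non-trivial semistable globally generated bundle on a curve of positive genus has slope at least $1$; both are fine, and your added remark that $k_i\geq r+1$ is a welcome precision the paper leaves implicit.
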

\begin{proof}

(1)-(2) First of all note that the assumption  $ \dim (V \cap H^0(E_i(-p))) \geq 1$ implies $d_i \geq r$.  Indeed, as $ E_i(-p)$ has a non zero global section $s_i$, the zero locus of $s_i$ is an effective divisor $Z_0(s_i)$ on $C_i$ of degree at least $1$ and we have an inclusion  of sheaves $\OO_{C_i}(Z_0(s_i)) \subset E_i$.  Since $E_i$ is semistable,  we  must have 
$$1 \leq \deg(Z_0(s_i)) = \mu(\OO_{C_i}(Z_0(s_i))) \leq \mu (E_i) = d_i/r,$$
which gives $d_i \geq r$. 
\\ \hfill\par
Let $S_j = V \cap H^0(E_j(-p))$.  By Lemma \ref{lemmatecnico0}
and its proof, $\Ker(\rho_i)\simeq  H^0(E_j(-p))$ and $Im \rho_i \simeq H^0(E_i)$. Hence we have the  following exact sequence:
$$ \xymatrix{
0 \ar[r] & S_j \ar[r] &  V \ar[r]^{{\rho_i}_{\vert V}
} \ar[r]&  V_i \ar[r]  & 0,}
$$
where $\dim S_j \geq 1$ and $\dim V_i \leq k-1 $. 
\\ \hfill\par

We can then complete  diagram  \ref{CD1} obtaining the following one:
\begin{equation}
    \label{mainCD}
\xymatrix{
         &  0 \ar[d]  & 0 \ar[d]  &  & & \\
0 \ar[r] &  S_j \otimes \OO_{C_i} \ar@{=}[r] \ar[d] &  S_j \otimes \OO_{C_i} \ar[r] \ar[d] &   0 \ar[d]   & \\
0 \ar[r] & M_{E,V} \otimes \OO_{C_i} \ar[r] \ar[d]^-{{\rho_i}_{\vert V}
} &
V \otimes \OO_{C_i} \ar[r]^-{{ev_V}_{\vert C_i}} \ar[d]^-{{\rho_i}_{\vert V}
} & E_i \ar[r] \ar@{=}[d] & 0  \\
0 \ar[r] & M_{E_i,V_i} \ar[r] \ar[d] & V_i \otimes \OO_{C_i} \ar[r]_-{{ev_{V_i}}} \ar[d] & E_i \ar[r] \ar[d] & 0 \\
 &   0  & 0 & 0 & }
\end{equation}
By definition the kernel of $ev_{V_i}$ is the kernel bundle $M_{E_i,V_i}$, which turns out to be  a non trivial quotient of $M_{E,V} \otimes \OO_{C_i}$.
\\ \hfill\par

(3) We  claim that $S_j \otimes \OO_{C_i}$ is a non trivial destabilizing subsheaf  of $M_{E,V} \otimes \OO_{C_i}$. In fact we have: 
$$0=\mu(  S_j \otimes \OO_{C_i} )  > \mu( M_{E,V} \otimes \OO_{C_i} )= \frac{-d_i}{k-r} <0,$$
since as we have seen $d_i \geq r$. 
\end{proof}

\begin{remark}
Note that, under the assumption of  Lemma  \ref{lemmatecnico1}, $M_{E_i,V_i}$ is actually a destabilizing quotient for $M_{E,V}\otimes \OO_{C_i}$.
\end{remark} 

\begin{remark}
\label{REM:TObeprecise}
Let $(E,V)$ be a generated pair with $E_i$ semistable and not trivial. Then
\begin{enumerate}
\item $h^0(E_i(-p))\geq 1$.
\\ \par

\noindent {\rm Indeed, the restriction $(E_i,V_i)$ of $(E,V)$ is a generated pair and $\dim(V_i)\geq r$ by Lemma \ref{lemmatecnico0}. Equality holds if and only if $E_i\simeq V_i\otimes \OO_{C_i}$ which is impossible by assumption. Hence $h^0(E_i)\geq \dim(V_i)\geq r+1$ and then $h^0(E_i(-p))\geq 1$}. 
\\ \par

\item If, moreover, $\dim(V)>h^0(E_i)$ then $V\cap H^0(E_j(-p))\neq\{ 0\}$.\\ \par

\noindent {\rm It is enough to notice that the restriction $\rho_i|_V:V\to H^0(E_i)$ cannot be injective in this case. Hence $V\cap\Ker(\rho_i)=V\cap H^0(E_j(-p))\neq \{0\}$.}
\end{enumerate}
\end{remark}

In the sequel we will need the following result of Xiao  which generalizes Clifford's Theorem to semistable vector bundles of rank $r \geq 2$ on smooth curves.
\begin{theorem}
\label{THM:XIAOCLIFFORD}
Let $E$ be a semistable vector bundle of rank $r \geq 1$ on a smooth irreducible complex projective curve $C$ of genus $g \geq 2$. If we assume that $0\leq \mu(E)\leq 2g-2$, then
$$h^0(E)\leq \deg(E)/2+r.$$
\end{theorem}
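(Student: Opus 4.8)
The plan is to prove the inequality by induction on the rank $r$. The base case $r=1$ is the classical Clifford theorem for a special line bundle, so assume $r\geq 2$. I will use two standard preparations. First, $E$ is semistable if and only if its Serre dual $E^{\vee}\otimes\omega_{C}$ is, the two slopes add up to $2g-2$, and — by Riemann--Roch together with Serre duality — the inequality $h^{0}(E)\leq \deg(E)/2+r$ for $E$ holds if and only if the analogous inequality holds for $E^{\vee}\otimes\omega_{C}$. Since at least one of $\mu(E)$, $\mu(E^{\vee}\otimes\omega_{C})$ is $\leq g-1$, I may assume from now on that $\mu(E)\leq g-1$.

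Second, I would replace $E$ by the subsheaf $E_{0}\subseteq E$ generated by $H^{0}(E)$; this is a sub-bundle with $H^{0}(E_{0})=H^{0}(E)$, $\deg(E_{0})\leq\deg(E)$ and $\operatorname{rk}(E_{0})\leq r$, so it is enough to bound $h^{0}(E_{0})$ by $\deg(E_{0})/2+\operatorname{rk}(E_{0})$. The advantage is that $E_{0}$ is globally generated, hence every quotient bundle of $E_{0}$ has non-negative slope, so $\mu_{\min}(E_{0})\geq 0$; and $E_{0}\subseteq E$ with $E$ semistable forces $\mu_{\max}(E_{0})\leq\mu(E)\leq g-1$. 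Thus every Harder--Narasimhan factor of $E_{0}$ is semistable with slope in $[0,g-1]\subseteq[0,2g-2]$. If $E_{0}$ is not semistable of full rank $r$ — that is, either $\operatorname{rk}(E_{0})<r$, or $E_{0}$ has at least two Harder--Narasimhan factors — then each factor $G_{i}$ has rank $<r$, the inductive hypothesis applies to it, and adding up $h^{0}(E_{0})\leq\sum_{i}h^{0}(G_{i})\leq\sum_{i}\bigl(\deg(G_{i})/2+\operatorname{rk}(G_{i})\bigr)$ gives the claim, because degrees and ranks of the factors sum to those of $E_{0}$.

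So the only remaining case is $E_{0}$ globally generated, semistable, of rank $r\geq 2$, with $0\leq\mu(E_{0})\leq g-1$. Here I would take a general global section of $E_{0}$: since $E_{0}$ is globally generated of rank $\geq 2$, a dimension count shows that a general section is nowhere vanishing, so it defines an exact sequence $0\to\OO_{C}\to E_{0}\to F\to 0$ with $F$ locally free of rank $r-1$ and $\deg(F)=\deg(E_{0})$. Now $F$ is again globally generated, so $\mu_{\min}(F)\geq 0$, and by taking preimages in $E_{0}$ and using the semistability of $E_{0}$ one checks that $\mu_{\max}(F)\leq 2\mu(E_{0})\leq 2g-2$. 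Hence every Harder--Narasimhan factor of $F$ is semistable of rank $<r$ with slope in $[0,2g-2]$, the inductive hypothesis bounds each of them, and summing gives $h^{0}(F)\leq\deg(F)/2+(r-1)$; therefore $h^{0}(E_{0})\leq h^{0}(\OO_{C})+h^{0}(F)\leq\deg(E_{0})/2+r$, which closes the induction.

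The delicate point — and the reason a naive induction (peel off a section, recurse on the quotient) does not work directly — is the control of the unstable quotient: a quotient of a semistable bundle need not be semistable, and a priori its maximal Harder--Narasimhan slope is only bounded by $2\mu(E)$, which may be as large as $4g-4$, outside the range in which a Clifford-type estimate is available. The two preparations above, namely reducing to $\mu(E)\leq g-1$ via Serre duality and then passing to the globally generated subsheaf, are precisely what keeps every slope that appears during the induction inside $[0,2g-2]$. Alternatively, one may simply appeal to Xiao's original argument.
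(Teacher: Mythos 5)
The paper does not actually prove Theorem \ref{THM:XIAOCLIFFORD}: it quotes it as a known result and refers the reader to \cite{BGN} for the proof. Your argument is therefore by construction a different route --- a self-contained one --- and it is correct; it is in substance the classical Xiao-type induction that underlies the cited reference. The two preliminary reductions do exactly the work you claim: Serre duality (using that the inequality for $E$ is equivalent, via Riemann--Roch, to the inequality for $E^{\vee}\otimes\omega_C$) forces $\mu(E)\leq g-1$, and passing to the image $E_0$ of the evaluation map makes $E_0$ a quotient of a trivial bundle, so every quotient of $E_0$ (and of $F=E_0/\OO_C$) has non-negative degree; together these keep every Harder--Narasimhan slope appearing in the induction inside $[0,2g-2]$, which is indeed the only delicate point. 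Two spots deserve one more line each: (i) in the base case $r=1$ the line bundle need not be special, but if $h^1(L)=0$ then $h^0(L)=\deg L+1-g\leq \deg L/2+1$ because $\deg L\leq 2g-2<2g$, so the claim still holds; (ii) the inequality $\deg(E_0)\leq\deg(E)$ is not automatic for a subsheaf of smaller rank --- it follows from $\mu(E_0)\leq\mu(E)$ (semistability of $E$) combined with $\mu(E)\geq 0$, and it is genuinely needed to pass from the bound on $h^0(E_0)$ to the bound on $h^0(E)$. With those remarks inserted, your proof is a legitimate stand-alone substitute for the paper's citation.
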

For the proof see \cite{BGN}. 
\\ \hfill\par

Our first result is the following condition for strongly unstable kernel bundles:

\begin{theorem}
\label{MEVunstable}
Let $C$ be a reducible nodal curve as in Section \ref{sec1}. 
Let $(E,V)$ be a generated pair on $C$. 
If $E_i$ is semistable and $ \dim (V \cap H^0(E_i(-p))) \geq 1$, $i = 1,2$, then the kernel bundle $M_{E,V}$ is strongly unstable. 
\end{theorem}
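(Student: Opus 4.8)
\emph{Strategy.} The goal is to show that $M:=M_{E,V}$ is $w$-unstable for \emph{every} polarization $w=(w_1,w_2)$, and I would do this by exhibiting explicit destabilizing subsheaves — in fact two, one supported on each component, will between them handle all polarizations. Put $n=k-r=\Rk(M)$ and $s_i=\dim\bigl(V\cap H^0(E_i(-p))\bigr)$, so $s_1,s_2\ge 1$ by hypothesis and $d_1,d_2\ge r$ by Lemma \ref{lemmatecnico1}. For $\{i,j\}=\{1,2\}$, diagram \ref{mainCD} shows that $V\cap H^0(E_j(-p))$ sits inside $M\otimes\OO_{C_i}$ as a trivial subbundle $\simeq\OO_{C_i}^{\,s_j}$; tensoring this inclusion by $\OO_{C_i}(-p)$ and pushing it forward along $j_i\colon C_i\hookrightarrow C$ gives an injection $\OO_{C_i}(-p)^{\oplus s_j}\hookrightarrow (j_i)_*\bigl((M\otimes\OO_{C_i})(-p)\bigr)$. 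Applying $-\otimes M$ to the sequence \ref{decomposition} and using the projection formula identifies $(j_i)_*\bigl((M\otimes\OO_{C_i})(-p)\bigr)$ with the kernel of the restriction $M\to M\otimes\OO_{C_j}$, hence with a subsheaf of $M$. In this way I obtain a subsheaf $G_i\hookrightarrow M$, supported on $C_i$, of multirank $s_j$ on $C_i$ and $0$ on $C_j$, with $\chi(G_i)=\chi\bigl(C_i,\OO_{C_i}(-p)^{\oplus s_j}\bigr)=-s_j\,g_i$.

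\emph{Slope computation.} From $\chi(M)=k(1-p_a(C))-\chi(E)$ and $\chi(E)=\chi(E_1)+\chi(E_2)-r$ one gets $\chi(M)=n(1-g_1-g_2)-(d_1+d_2)$, and since $M$ has rank $n$ on both components, $\mu_w(M)=(1-g_1-g_2)-\dfrac{d_1+d_2}{n}$; on the other hand $\mu_w(G_1)=\dfrac{-s_2g_1}{w_1 s_2}=-\dfrac{g_1}{w_1}$ and likewise $\mu_w(G_2)=-\dfrac{g_2}{w_2}$. Writing $\Lambda=(g_1+g_2-1)+\dfrac{d_1+d_2}{n}>0$, an elementary rearrangement shows that $G_1$ destabilizes $M$ exactly when $w_1>\beta_1:=g_1/\Lambda$, and $G_2$ destabilizes $M$ exactly when $w_2>\beta_2:=g_2/\Lambda$, i.e. when $w_1<1-\beta_2$. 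Since $\beta_1+\beta_2=(g_1+g_2)/\Lambda$, the two open regions $\{w_1>\beta_1\}$ and $\{w_1<1-\beta_2\}$ cover the whole interval $(0,1)$ as soon as $\beta_1+\beta_2<1$, that is $\Lambda>g_1+g_2$, that is $d_1+d_2>n=k-r$.

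\emph{The numerical input.} Thus everything reduces to proving $d_1+d_2>k-r$, which is the step I expect to be delicate and where Theorem \ref{THM:XIAOCLIFFORD} enters. Since $d_i\ge r$, each $E_i$ is semistable with $\mu(E_i)=d_i/r\ge 1>0$. If $\mu(E_i)\le 2g_i-2$, Theorem \ref{THM:XIAOCLIFFORD} gives $h^0(E_i)\le d_i/2+r<d_i+r$; if $\mu(E_i)>2g_i-2$, then $\omega_{C_i}\otimes E_i^\vee$ is semistable of negative slope, hence has no global sections, so $h^1(E_i)=0$ and $h^0(E_i)=\chi(E_i)=d_i-r(g_i-1)<d_i+r$. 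In either case $h^0(E_i)<d_i+r$ for $i=1,2$, so combining with $k=\dim V\le h^0(E)=h^0(E_1)+h^0(E_2)-r$ (Lemma \ref{lemmatecnico0}) gives $k-r\le h^0(E_1)+h^0(E_2)-2r<d_1+d_2$, as required.

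\emph{Conclusion.} For an arbitrary polarization $w=(w_1,w_2)$ we then have either $w_1>\beta_1$, so that $G_1$ is $w$-destabilizing, or $w_1\le\beta_1<1-\beta_2$, so that $G_2$ is $w$-destabilizing; in either case $M_{E,V}$ is $w$-unstable. As $w$ was arbitrary, $M_{E,V}$ is strongly unstable. The only steps requiring real care are verifying that $(j_i)_*\bigl((M\otimes\OO_{C_i})(-p)\bigr)$ is genuinely the subsheaf $\ker(M\to M\otimes\OO_{C_j})$ of $M$ (so that the $G_i$ are honest subsheaves), and the short case analysis establishing $d_1+d_2>k-r$; the remaining slope bookkeeping is routine.
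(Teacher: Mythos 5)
Your proof is correct and follows essentially the same route as the paper's: the same destabilizing subsheaves $S_j\otimes\OO_{C_i}(-p)\hookrightarrow M_{E,V}$ obtained from diagram \ref{mainCD} and sequence \ref{decomposition}, the same slope computations, and the same key numerical inequality $d_1+d_2>k-r$ established by the identical Clifford/Riemann--Roch case analysis on each $E_i$. The only difference is presentational: the paper runs the endgame as a contradiction (semistability forces $w_i\le\beta_i$ for both $i$, whence $1=w_1+w_2\le\beta_1+\beta_2<1$), whereas you phrase it directly as the two destabilizing regions covering the whole interval of polarizations.
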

\begin{proof}
We have to prove that $M_{E,V}$ is $w$-unstable with respect to any polarization $w$ on $C$. Assume on the contrary, that there exists a polarization $w= (w_1,w_2)$ such that $M_{E,V}$ is $w$-semistable. As in the proof of Lemma \ref{lemmatecnico1},  let  $S_j = V \cap H^0(E_j(-p))$ and $s_j = \dim S_j \geq 1$, $j = 1,2$.
As we have proved in Lemma \ref{lemmatecnico1}, for $i \not= j$,  the sheaf $S_j \otimes \OO_{C_i}$ is a  subsheaf of $M_{E,V} \otimes \OO_{C_i}$. Hence we also have the following  inclusion of locally free sheaves on $C_i$: 
$$ 
\xymatrix{
S_j \otimes \OO_{C_i}(-p) \ar@{^{(}->}[r]& M_{E,V} \otimes \OO_{C_i}(-p).
}
$$
If we tensor with $M_{E,V}$ the exact sequence \ref{decomposition} we obtain
$$ 0 \to M_{E,V} \otimes \OO_{C_i}(-p) \to M_{E,V} \to M_{E,V} \otimes \OO_{C_j} \to 0,$$
from which we deduce the following inclusion of sheaves  of depth one on $C$:
$$ 
\xymatrix{
S_j \otimes \OO_{C_i}(-p) \ar@{^{(}->}[r]& M_{E,V}.
}
$$
Since  we are assuming that $M_{E,V}$ is $w$-semistable, we have
\begin{equation}
\label{EQ:SEMISTABASS}
\mu_w(S_j \otimes \OO_{C_i}(-p)) \leq \mu_w( M_{E,V}).
\end{equation}
As
$$\mu_w(S_j\otimes \OO_{C_i}(-p)) = \frac{\chi( S_j \otimes \OO_{C_i}(-p))}{w_is_j}= \frac{-g_i}{w_i},$$
and
$$\mu_w(M_{E,V}) = \frac{\chi(M_{E,V})}{k-r}= -\frac{(d_1+d_2) +(k -r)(p_a(C)-1)}{k -r},$$
from the inequality \ref{EQ:SEMISTABASS} we obtain
\begin{equation}
\label{INEQ1_E}
w_i \leq \frac{g_i(k -r)}{d_1 + d_2 + (k -r)(p_a(C)-1)}, \quad i = 1,2. 
\end{equation}
Now recall that $(w_1,w_2)$ is a polarization so $w_1+w_2=1$. Hence, 
\begin{equation}
\label{INEQ_E}
1=w_1+w_2 \leq \frac{p_a(C)(k -r)}{d_1 + d_2 + (k -r)(p_a(C)-1)}
=\frac{p_a(C)(k -r)}{p_a(C)(k-r)+(d_1 +d_2-k+r)}.
\end{equation}
\hfill\par

{\bf Claim}:  $k< d_1 + d_2 +r$.
\\ \hfill\par

First of all, by Lemma \ref{lemmatecnico0} we have: 
\begin{equation}
\label{EQ:INEQH0}
k \leq h^0(E) = h^0(E_1)+h^0(E_2)-r.
\end{equation}
In order to prove the claim, we can consider the following cases:
\begin{description}
\item [Case 1] $\mu(E_i)>2g_i-2$ for $i=1,2$.
Since $E_i$ are semistable this implies that 
$h^1(E_i)=0$. So, by Riemann-Roch theorem,
$$k\leq h^0(E)= h^0(E_1)+h^0(E_2)-r=d_1+d_2+r(1-g_1-g_2)<d_1+d_2 +r$$
as $g_1+g_2 \geq 2.$
\item [Case 2] $\mu(E_i) > 2g_i-2$ and $\mu(E_j) \leq 2g_j -2$ for $i\neq j$.  We have $h^1(E_i) = 0$ so by Riemann-Roch we can computer $h^0(E_i)$. On the other hand, we can use Clifford's inequality in order to give a bound on $h^0(E_j)$. Then we get
$$k\leq h^0(E) =  h^0(E_i)+h^0(E_j)-r\leq d_i+r(1-g_i)+\frac{d_j}{2}+r-r< d_1 + d_2 + r$$
as $g_i\geq 2$ and $d_j>0$.
\item [Case 3] $\mu(E_i) \leq 2g_i-2$ for $i=1,2$. 
We use Clifford's inequality for a bound on both vector bundles:
$$ k\leq h^0(E) =  h^0(E_1) + h^0(E_2)-r \leq 
\frac{d_1}{2}+r+\frac{d_2}{2}+r-r<d_1+d_2+r.$$
Note that the last inequality holds since $d_i\geq 1$ for $i=1,2$. This follows since $E_i$ is semistable and $h^0(E_i(-p))\geq 1$ by assumption.
\end{description}
From the claim and inequality \ref{INEQ_E} we obtain $1 = w_1+w_2<1$ which is a contradiction.  
Hence $M_{E,V}$ is actually $w$-unstable for all polarizations $w$.
\end{proof}
Let $E$ be a globally generated  vector bundle of rank $r$ on the curve $C$. Then we can apply Lemma \ref{lemmatecnico1}, Remark \ref{REM:TObeprecise} and Theorem \ref{MEVunstable} to the generated pair $(E,H^0(E))$, in order to
obtain the following result on its kernel bundle $M_E$: 
 \begin{corollary}
 \label{cor1}
 Let $C$ be a reducible nodal curve as in Section \ref{sec1}. 
Let $E$ be a globally generated vector bundle on $C$.
If $E_1$ and $E_2$ are semistable and not trivial,
then the restriction of $M_E$ to each component $C_i$ is unstable and $M_E$ is strongly unstable. 
\end{corollary}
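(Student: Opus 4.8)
The plan is to deduce Corollary \ref{cor1} directly from the machinery already assembled, with essentially no new computation. Apply everything to the complete generated pair $(E, H^0(E))$, so that $V = H^0(E)$ and $V_i = \rho_i(H^0(E)) = H^0(E_i)$ by part (3) of Lemma \ref{lemmatecnico0}. The first task is to check that the hypotheses of Lemma \ref{lemmatecnico1} and Theorem \ref{MEVunstable} are met, i.e. that $\dim\bigl(V \cap H^0(E_i(-p))\bigr) \geq 1$ for $i = 1,2$. But $V \cap H^0(E_i(-p)) = H^0(E) \cap H^0(E_i(-p)) = H^0(E_i(-p))$ once we identify, as in the proof of Lemma \ref{lemmatecnico0}(3), $H^0(E_j(-p))$ with $\Ker(\rho_j) \subseteq H^0(E)$; so the condition becomes $h^0(E_i(-p)) \geq 1$, which is exactly Remark \ref{REM:TObeprecise}(1) under the standing assumption that $E_i$ is semistable and nontrivial.

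With the hypotheses verified, the restriction statement is immediate from Lemma \ref{lemmatecnico1}(3) applied to $(E, H^0(E))$: the restriction $M_E \otimes \OO_{C_i} = M_{E,V} \otimes \OO_{C_i}$ is unstable on $C_i$, for $i = 1, 2$, because $S_j \otimes \OO_{C_i}$ (with $s_j = h^0(E_j(-p)) \geq 1$) is a destabilizing subsheaf, having slope $0$ while $M_E \otimes \OO_{C_i}$ has slope $-d_i/(k-r) < 0$. Likewise, strong instability of $M_E$ is exactly the conclusion of Theorem \ref{MEVunstable} applied to the same pair; no polarization makes $M_E$ $w$-semistable.

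The only point requiring a line of care — and the closest thing to an obstacle — is bookkeeping the identification $V \cap H^0(E_i(-p)) = H^0(E_i(-p))$ in the complete case, since a priori $H^0(E_i(-p))$ sits inside $H^0(E_i)$, not inside $H^0(E) = V$; one must invoke the exact sequence $0 \to H^0(E_j(-p)) \to H^0(E) \to H^0(E_i)$ from the proof of Lemma \ref{lemmatecnico0}(3) (with the roles of $i$ and $j$ swapped) to see that, as a subspace of $V = H^0(E)$, the space $H^0(E_i(-p))$ is precisely $\Ker(\rho_j) = V \cap H^0(E_i(-p))$. Once this identification is stated, the corollary is a one-sentence citation of Lemma \ref{lemmatecnico1}, Remark \ref{REM:TObeprecise}, and Theorem \ref{MEVunstable}, which is how I would write it.
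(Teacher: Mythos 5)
Your proposal is correct and follows exactly the paper's route: the paper derives Corollary \ref{cor1} precisely by applying Lemma \ref{lemmatecnico1}, Remark \ref{REM:TObeprecise} and Theorem \ref{MEVunstable} to the complete pair $(E,H^0(E))$, with Remark \ref{REM:TObeprecise}(1) supplying $h^0(E_i(-p))\geq 1$ so that the hypothesis $\dim(V\cap H^0(E_i(-p)))\geq 1$ holds. Your extra care about identifying $H^0(E_i(-p))$ with $\Ker(\rho_j)\subseteq H^0(E)$ is a sound (and in the paper implicit) bookkeeping point, marred only by a harmless index slip in your first paragraph that you correct in your last.
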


In particular, for line bundles we obtain the following: 

\begin{corollary}
\label{cor2}
Let $C$ be a reducible nodal curve as in Section \ref{sec1}. 
Let $L$ be a globally generated line bundle on $C$ with non trivial restrictions.
Then the restriction of $M_L$ to each component $C_i$ is unstable and $M_L$ is strongly unstable. 
\end{corollary}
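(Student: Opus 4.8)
The plan is to deduce this corollary directly from Corollary \ref{cor1}, which has just been established for globally generated vector bundles of arbitrary rank. Since a line bundle $L$ is simply a vector bundle of rank $r=1$, the only thing that needs to be checked is that the hypotheses of Corollary \ref{cor1} are automatically satisfied once we assume that $L$ is globally generated with non-trivial restrictions $L_1$ and $L_2$. There are two conditions to verify for each component: that $L_i$ is semistable, and that $L_i$ is not trivial.

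First I would observe that semistability is vacuous for line bundles: a line bundle on a smooth irreducible curve has no proper subsheaves of the same rank, and any proper subsheaf has rank $0$, hence is torsion; since $L_i$ is torsion-free (being locally free of rank one), its only proper subsheaf is the zero sheaf. Thus every line bundle $L_i$ is semistable (indeed stable) in the trivial sense, so that part of the hypothesis of Corollary \ref{cor1} holds with no assumptions. Second, the hypothesis that $L_i$ is not trivial is exactly the assumption ``non trivial restrictions'' in the statement of Corollary \ref{cor2}. Therefore the pair $(L, H^0(L))$ satisfies all the hypotheses of Corollary \ref{cor1}.

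Having checked the hypotheses, I would simply invoke Corollary \ref{cor1} applied to $E = L$: it gives that the restriction of $M_L$ to each component $C_i$ is unstable and that $M_L$ is strongly unstable, which is precisely the assertion of Corollary \ref{cor2}.

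There is essentially no obstacle here: the corollary is a specialization of the previously proved Corollary \ref{cor1} to the rank-one case, and the only (entirely routine) point is the remark that semistability imposes no condition on line bundles, so it can be dropped from the hypotheses. One could, if desired, phrase the proof as a single sentence: ``A line bundle is always semistable, so this is the case $r=1$ of Corollary \ref{cor1}.''
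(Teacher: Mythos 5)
Your proposal is correct and matches the paper's intent exactly: the paper presents Corollary \ref{cor2} as the immediate specialization of Corollary \ref{cor1} to rank one, the only observation needed being that a line bundle on a smooth curve is automatically (semi)stable, so that hypothesis of Corollary \ref{cor1} is vacuous. Nothing further is required.
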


\section{$w$-semistable kernel bundles}
\label{sec3}

Let $C$ be a nodal reducible curve with two smooth irreducible components $C_i$ of genus $g_i \geq 2$ and  a single node  $p$ as in Section \ref{sec1}.
Let $(E,V)$ be a generated pair on $C$, where $E$ is a vector bundle of rank $r \geq 1$ on $C$ and $V \subset H^0(E)$ of dimension $k \geq r+1$.  Let $E_i$ be the restriction of $E$ to the component $C_i$ and assume that $E_1$ and $E_2$ are both semistable. As we have seen in the previous section, when $V \cap H^0(E_j(-p)) \neq \{0\}$ for $j=1,2$, the kernel bundle of the pair $(E,V)$ is strongly unstable and both restrictions are unstable. In this section we would like to study generated pairs $(E,V)$ defining $w$-semistable kernel bundles with semistable restrictions. Hence it is natural to assume the following condition:
$$
(\star) \qquad \qquad \qquad \qquad V \cap H^0(E_1(-p))= V \cap H^0(E_2(-p)) = \{ 0 \}.\quad\qquad  \qquad \qquad \qquad \qquad$$
Let $M_{E,V}$ be the kernel bundle of the pair $(E,V)$ and $(E_i,V_i)$ the restriction of the pair $(E,V)$ to the component $C_i$. Then we have the following lemma:

\begin{lemma}
\label{lemmatecnico2}
Let $C$ be a reducible nodal curve as in Section \ref{sec1}.  Let $(E,V)$ be a generated pair, then  we have the following properties:
\begin{enumerate}
\item $(E,V)$ satifies $(\star)$ if and only if 
$M_{E,V} \otimes \OO_{C_i} \simeq M_{E_i,V_i}$, $i = 1,2$, 
where $M_{E_i,V_i}$ is the kernel bundle of $(E_i,V_i)$ on $C_i$;
\item under the above assumption, there exists a polarization $w = (w_1,w_2)$ such that 
$$ w_i \chi(M_{E,V}) \leq \chi(M_{E_i,V_i}) \leq w_i
\chi(M_{E,V}) + \Rk( M_{E,V}), \quad i= 1,2.$$
\end{enumerate}
\end{lemma}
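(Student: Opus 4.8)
The plan is to prove the two parts in sequence, using the diagram technology already set up for Lemma \ref{lemmatecnico1}. For part (1), the key observation is that, for $i \neq j$, tensoring the exact sequence \ref{DSB} with $\OO_{C_i}$ produces (as in the proof of Lemma \ref{lemmatecnico0}(1)) the exact sequence $0\to M_{E,V}\otimes\OO_{C_i}\to V\otimes\OO_{C_i}\to E_i\to 0$, and the commutative diagram \ref{CD1} shows that $ev_{V_i}$ is the composition of ${ev_V}_{\vert C_i}$ with the surjection ${\rho_i}_{\vert V}\otimes\id_{\OO_{C_i}}$. A snake-lemma style diagram chase — exactly the one displayed in \ref{mainCD} but now allowing $S_j=V\cap H^0(E_j(-p))$ to be zero — shows that $M_{E,V}\otimes\OO_{C_i}\to M_{E_i,V_i}$ is surjective with kernel $S_j\otimes\OO_{C_i}$. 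Hence this map is an isomorphism precisely when $S_j=\{0\}$; since this must hold for both $i=1,2$ (so for both $j=1,2$), the equivalence with $(\star)$ follows.

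For part (2), I would simply invoke the numerical criterion already available. Under $(\star)$, part (1) gives $M_{E,V}\otimes\OO_{C_i}\simeq M_{E_i,V_i}$, so the restrictions of the rank $k-r$ bundle $M_{E,V}$ are $M_{E_1,V_1}$ and $M_{E_2,V_2}$. I want a polarization $w=(w_1,w_2)$ with $0<w_i<1$, $w_1+w_2=1$, satisfying
\begin{equation}
\label{EQ:TARGET}
w_i\,\chi(M_{E,V})\leq \chi(M_{E_i,V_i})\leq w_i\,\chi(M_{E,V})+(k-r),\quad i=1,2.
\end{equation}
Writing $\chi=\chi(M_{E,V})$ and $c_i=\chi(M_{E_i,V_i})$, note that from the exact sequence $0\to M_{E,V}\otimes\OO_{C_i}(-p)\to M_{E,V}\to M_{E,V}\otimes\OO_{C_j}\to 0$ and its twisted companion one gets $c_1+c_2=\chi+(k-r)$ (the node contributes $k-r$, i.e. the rank), so the two inequalities in \ref{EQ:TARGET} for $i=1$ and for $i=2$ are equivalent to each other. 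Thus it suffices to choose $w_1$ in the interval determined by $(c_1-(k-r))/\chi \le w_1 \le c_1/\chi$ and to check this interval meets $(0,1)$; the length of the interval is $(k-r)/|\chi|$, and since $\chi = k(1-p_a(C))-\chi(E)$ is negative for a genuine generated pair (here $p_a(C)=g_1+g_2\ge 4$), one rearranges the defining inequalities of $\chi$ and $c_i$ in terms of the degrees $d_i$ to confirm the admissible $w_1$ lies strictly between $0$ and $1$.

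The main obstacle is the bookkeeping in part (2): one has to pin down the sign of $\chi(M_{E,V})$ and verify that the closed interval of admissible weights $w_1$ genuinely intersects the open interval $(0,1)$ rather than abutting an endpoint. This is where the hypotheses on the genera ($g_i\ge 2$) and the fact that $(E_i,V_i)$ is a generated pair with $k_i\ge r$ (Lemma \ref{lemmatecnico0}) enter, guaranteeing the relevant Euler characteristics are strictly negative and the estimate is strict. Part (1) is essentially a formal consequence of the diagram \ref{mainCD} already in hand, so the only real work is assembling these numerical inequalities; I would keep the computation at the level of "rearrange and compare", citing Lemma \ref{lemmatecnico0} for $k_i\ge r$ and $h^0(E)=h^0(E_1)+h^0(E_2)-r$ as needed.
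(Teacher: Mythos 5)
Your proposal follows the paper's proof essentially verbatim: part (1) is the same diagram chase through \ref{mainCD}, identifying the kernel of the surjection $M_{E,V}\otimes\OO_{C_i}\to M_{E_i,V_i}$ with $S_j\otimes\OO_{C_i}$, and part (2) is the same reduction, via $\chi(M_{E,V})=\chi(M_{E_1,V_1})+\chi(M_{E_2,V_2})-(k-r)$, to placing $w_1$ in an explicit interval of length $(k-r)/|\chi(M_{E,V})|$ inside $(0,1)$. One small correction: since $\chi(M_{E,V})<0$, dividing the two inequalities by it reverses them, so the admissible range is $c_1/\chi(M_{E,V})\leq w_1\leq (c_1-(k-r))/\chi(M_{E,V})$ --- the opposite order from the one you wrote --- which in the paper's notation is exactly $[a_1,b_1]$ with $0<a_1<b_1<1$.
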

\begin{proof}
(1) As we have seen in the proof of Lemma  \ref{lemmatecnico1},  the kernel of $\rho_i \colon H^0(E) \to H^0(E_i)$ is   $H^0(E_j(-p))$.   
So,  ${\rho_i}_{\vert V} \colon V \to V_i$ is  an isomorphism  if and only if 
$H^0(E_j(-p)) \cap V = \{0\}$. 
From  commutative diagramm \ref{mainCD} of Lemma \ref{lemmatecnico1},  $M_{E,V} \otimes \OO_{C_i} \simeq M_{E_i,V_i}$, if and only if  
$$S_j  = V \cap  H^0(E_j(-p))= \{ 0 \}.$$
(2) We want to prove the existence of   $w= (w_1,w_2) \in {\mathbb Q}^2$ with $0 < w_i < 1$ and $w_1 + w_2 = 1$ satisfying  the  conditions:
$$
w_i \chi(M_{E,V}) \leq 
\chi(M_{E_i,V_i})\leq w_i\chi(M_{E,V})+\Rk(M_{E,V}),
\quad i= 1,2. 
$$
Since $\chi(M_{E,V})= \chi(M_{E_1,V_1}) + \chi(M_{E_2,V_2}) - \Rk(M_{E,V})$, it is easy to verify that whenever  $w_1$ satisfies the above condition  for $i=1$, then $w_2=1-w_1$ satisfies the condition for $i = 2$.  So it is enough to find 
$w_1 \in (0,1) \cap {\mathbb Q}$ satisfying the following system of inequalities:
$$
\begin{cases}
\chi(M_{E_1,V_1}) \leq w_1 \chi(M_{E,V}) + \Rk(M_{E,V}) \\
\chi(M_{E_1,V_1}) \geq w_1 \chi(M_{E,V}).
\end{cases}
$$
From the exact sequences defining $M_{E,V}$ and $M_{E_1,V_1}$ one can get
$$\chi(M_{E,V})=(k-r)(1-p_a(C))-(d_1+d_2), \qquad \chi(M_{E_1,V_1})=(k-r)(1-g_1)-d_1$$
and $\Rk(M_{E,V}) = k-r$.
By substituting  in the above system  we obtain:
$$
\dfrac{(k-r)(g_1-1)+d_1}{(k-r)(p_a(C)-1)+(d_1+d_2)}\leq w_1\leq \dfrac{(k-r)g_1+d_1}{(k-r)(p_a(C)-1)+(d_1+d_2)}.$$
By denoting
$$ a_1 = \dfrac{(k-r)(g_1-1)+d_1}{(k-r)(p_a(C)-1)+(d_1+d_2)} \qquad b_1 = \dfrac{(k-r)g_1+d_1}{(k-r)(p_a(C)-1)+(d_1+d_2)}$$
it is immediate to see that 
$$0 < a_1 <  b_1  < 1$$
under the hypothesis of the lemma. Hence, for any $w_1 \in (a_1,b_1) \cap {\mathbb Q}$, we have that $(w_1,w_2)$  satisfies both the conditions of the lemma. 
\end{proof}

Using Lemma \ref{lemmatecnico2} we get the first result of this section.

\begin{theorem}
\label{THM:wstab}
Let $C$ be a nodal reducible curve as in Section \ref{sec1}. 
Let $(E,V)$ be a generated pair satisfying condition $(\star)$. If both $M_{E_1,V_1}$ and $M_{E_2,V_2}$ are semistable then there exists a polarization $w$ such that $M_{E,V}$ is $w$-semistable. Moreover, if $M_{E_i,V_i}$ is stable for at least one $i$, then $M_{E,V}$ is $w$-stable. 
\end{theorem}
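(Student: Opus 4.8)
The plan is to deduce the statement by combining the two parts of Lemma \ref{lemmatecnico2} with part (2) of Theorem \ref{stabilityconditions}; no new computation beyond what is already packaged in those results should be needed. First, since $(E,V)$ satisfies $(\star)$, Lemma \ref{lemmatecnico2}(1) gives isomorphisms $M_{E,V}\otimes\OO_{C_i}\simeq M_{E_i,V_i}$ for $i=1,2$. Hence the two restrictions of the vector bundle $M_{E,V}$ to the components of $C$ are exactly the kernel bundles $M_{E_1,V_1}$ and $M_{E_2,V_2}$, which are semistable by hypothesis (and at least one of them stable, in the refined part of the statement). In particular $\chi\bigl((M_{E,V})_{\vert C_i}\bigr)=\chi(M_{E_i,V_i})$ and $\Rk(M_{E,V})=k-r$.

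Next, I invoke Lemma \ref{lemmatecnico2}(2): since $(E,V)$ satisfies $(\star)$, there is a polarization $w=(w_1,w_2)$ (so $0<w_i<1$, $w_1+w_2=1$, which is part of the conclusion of that lemma) for which
$$
w_i\,\chi(M_{E,V})\le\chi(M_{E_i,V_i})\le w_i\,\chi(M_{E,V})+\Rk(M_{E,V}),\qquad i=1,2.
$$
Rewriting $\chi(M_{E_i,V_i})$ as $\chi\bigl((M_{E,V})_{\vert C_i}\bigr)$ and $\Rk(M_{E,V})$ as the rank $k-r$ of $M_{E,V}$, this says precisely that $M_{E,V}$ satisfies the numerical condition \eqref{COND:TEIXI} of Theorem \ref{stabilityconditions} with respect to $w$.

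Finally, I apply Theorem \ref{stabilityconditions}(2) to the vector bundle $M_{E,V}$ on $C$: its restrictions $M_{E_1,V_1}=(M_{E,V})_{\vert C_1}$ and $M_{E_2,V_2}=(M_{E,V})_{\vert C_2}$ are semistable, and $M_{E,V}$ satisfies \eqref{COND:TEIXI} for the chosen $w$; therefore $M_{E,V}$ is $w$-semistable. Moreover, if $M_{E_i,V_i}$ is stable for at least one $i$, then one restriction of $M_{E,V}$ is stable, and the same part of Theorem \ref{stabilityconditions} yields that $M_{E,V}$ is $w$-stable. This completes the argument.

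The proof presents essentially no obstacle: all the genuine content — the identification of the restrictions under $(\star)$ and the existence of a polarization satisfying the Teixidor-type inequality — is already contained in Lemma \ref{lemmatecnico2}. The only point to keep in mind while writing up is the bookkeeping identifications $\chi(M_{E_i,V_i})=\chi((M_{E,V})_{\vert C_i})$ and $\Rk(M_{E,V})=k-r$, which turn the conclusion of Lemma \ref{lemmatecnico2}(2) verbatim into the hypothesis \eqref{COND:TEIXI} of Theorem \ref{stabilityconditions}, so that the latter applies directly.
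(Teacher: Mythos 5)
Your proposal is correct and follows exactly the same route as the paper's own proof: identify the restrictions of $M_{E,V}$ with the $M_{E_i,V_i}$ via Lemma \ref{lemmatecnico2}(1), use Lemma \ref{lemmatecnico2}(2) to produce a polarization satisfying condition \eqref{COND:TEIXI}, and conclude by Theorem \ref{stabilityconditions}(2), with the stability refinement handled the same way. No differences worth noting.
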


\begin{proof}
We have to prove that $M_{E,V}$ is $w$-semistable for a suitable polarization. By Theorem \ref{stabilityconditions}, it is enough to verify that both  restrictions of $M_{E,V}$ to $C_1$ and $C_2$ are semistable and to find a polarization $w$ satisfying the    conditions \ref{COND:TEIXI}: 
\begin{equation}
    \label{COND}
 w_i \chi(M_{E,V}) \leq \chi(M_{E,V} \otimes \OO_{C_i}) \leq w_i
\chi(M_{E,V}) + \Rk( M_{E,V}), \quad i= 1,2.
\end{equation}
By Lemma \ref{lemmatecnico2} we have that the restrictions of $M_{E,V}$ are $M_{E_i,V_i}$,  which are semistable by assumption. Finally, conditions \ref{COND} are exactly the ones given in Lemma \ref{lemmatecnico2}.
The last assertion follows from Theorem \ref{stabilityconditions}.
\end{proof}

\begin{remark}
Let $C$ be  a nodal reducible curve as in Section \ref{sec1}.  
Let $E$ be a globally generated vector bundle of rank $r$ on the curve $C$.
Assume that there exist $k$ global sections generating $E$, with $r+1\leq k \leq \min(h^0(E_1),h^0(E_2))$. Then  a general pair $(E,V)$ with  $\dim V = k$ is generated and satisfies condition $(\star)$.
\end{remark}

\begin{proof}
For any $k \leq \min (h^0(E_1,h^0(E_2))$, let $G(k,H^0(E))$ denote the Grassmannian variety parametrizing  $k$-dimensional linear subspaces of $H^0(E)$.  Under our hypothesis, the set  
 of linear subspaces $V$ such that $(E,V)$ is generated is a non empty open subset.   So it is enough to show that a general pair $(E,V)$ satisfies $(\star)$.    Let's consider the Schubert cycle 
$$\sigma_j = \{ V \in G(k,H^0(E)) \,|\,  V \cap  H^0(E_j(-p)) \geq 1 \},$$
since  $k + h^0(E_j(-p)) \leq h^0(E)$, then it is a  proper closed subvariety of $G(k,H^0(E))$.  This concludes the proof.   
\end{proof}

The first application of this Theorem deals with the case of line bundles. For each smooth irreducible component $C_i$ of the nodal curve $C$, we denote by $\mathcal{G}^{k-1}_{d_i}(C_i)$ the variety parametrizing linear series of degree $d_i$ and dimension $k-1$ on the curve $C_i$. An element of  $\mathcal{G}^{k-1}_{d_i}(C_i)$ is given by a pair $(L_i,V_i)$ of type $(1,d_i,k)$ on $C_i$. We recall that, if $C_i$ is a general curve, then $\mathcal{G}^{k-1}_{d_i}(C_i)$ is non empty if and only if the Brill-Noether number $$\rho_i=g_i-k(g_i-d_i+k-1)$$ is non negative, hence if and only if $d_i \geq g_i + k-1 - \frac{g_i}{k}$. For details one can see \cite{ACGH}.

\begin{theorem}
\label{THM:MAIN2}
Let $C$ be a reducible nodal curve as in Section \ref{sec1}. Let $(L,V)$ be a generated pair on $C$, where $L$ is a line bundle and $\dim V = k$. Let $(L_i,V_i)$ be its restriction to $C_i$. If $C$ is general and $(L_i,V_i)$ is general in $\mathcal{G}^{k-1}_{d_i}(C_i)$, then there exists a polarization $w$ such that $M_{L,V}$ is $w$-semistable.
\end{theorem}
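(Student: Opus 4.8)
The plan is to reduce the statement to Theorem \ref{THM:wstab}: all that has to be checked is that the pair $(L,V)$ satisfies condition $(\star)$ and that both kernel bundles $M_{L_1,V_1}$ and $M_{L_2,V_2}$ are semistable.

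I expect condition $(\star)$ to be essentially automatic from the genericity assumptions. Requiring $(L_i,V_i)$ to be general in $\mathcal{G}^{k-1}_{d_i}(C_i)$ in particular forces it to be a pair of type $(1,d_i,k)$, i.e. $\dim V_i = k = \dim V$ for $i = 1,2$. Since $\rho_i|_V\colon V\to V_i$ is surjective by the very definition $V_i=\rho_i(V)$ and now has source and target of the same finite dimension, it is an isomorphism, so $V\cap\Ker(\rho_i)=\{0\}$. By Lemma \ref{lemmatecnico0}, $\Ker(\rho_i)=H^0(L_j(-p))$ for $\{i,j\}=\{1,2\}$, hence $V\cap H^0(L_1(-p)) = V\cap H^0(L_2(-p)) = \{0\}$; that is, $(L,V)$ satisfies $(\star)$. (Equivalently, $\rho_i|_V$ being an isomorphism is exactly the condition appearing in the proof of Lemma \ref{lemmatecnico2}.)

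For the semistability of the two component kernel bundles I would argue as follows. Since $C$ is a general reducible nodal curve, each $C_i$ is general in $\mathcal{M}_{g_i}$ (as recalled in Section \ref{sec1}); by Lemma \ref{lemmatecnico0} the restriction $(L_i,V_i)$ is a generated pair, so $M_{L_i,V_i}$ is well defined; and $(L_i,V_i)$ is general in $\mathcal{G}^{k-1}_{d_i}(C_i)$ by hypothesis. Thus the line-bundle case of Butler's conjecture, established in \cite{BBN2}, applies on each $C_i$ and gives that $M_{L_i,V_i}$ is semistable. Feeding $(\star)$ and this semistability into Theorem \ref{THM:wstab} — which through Lemma \ref{lemmatecnico2}(1) identifies the restrictions of $M_{L,V}$ with the bundles $M_{L_i,V_i}$ — produces the desired polarization $w$ for which $M_{L,V}$ is $w$-semistable.

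There is no deep new argument here; the one point that needs care is the compatibility of the genericity hypotheses. One must check that ``$C$ general in $\Delta_{g_1}$'' indeed yields ``$C_i$ general in $\mathcal{M}_{g_i}$'' (the statement recalled in Section \ref{sec1}) and, more to the point, that the numerical conditions under which \cite{BBN2} proves semistability of $M_{L_i,V_i}$ are satisfied — in particular that $(d_i,k)$ lies in the Brill–Noether range making $\mathcal{G}^{k-1}_{d_i}(C_i)$ nonempty and generically well behaved, and that the pertinent lower bound on $g_i$ is met. With those in place, the argument is a direct assembly of results already available.
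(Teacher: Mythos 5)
Your proposal is correct and follows essentially the same route as the paper's own proof: verify condition $(\star)$ by noting that $\dim V_i = k = \dim V$ forces $\rho_i|_V$ to be an isomorphism, then invoke \cite{BBN2} (Theorem 5.1) for the semistability of each $M_{L_i,V_i}$ and conclude via Theorem \ref{THM:wstab}. Your closing remark about checking the Brill--Noether numerical conditions is a fair point of care, but the paper handles it simply by observing that nonemptiness of $\mathcal{G}^{k-1}_{d_i}(C_i)$ is implicit in the hypothesis that $(L_i,V_i)$ lies in it.
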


\begin{proof}  By Theorem \ref{THM:wstab} it is enough to show that $(L,V)$ satisfies condition $(\star)$ and that $M_{L_i,V_i}$ is semistable for $i=1,2$. 

Since $C$ is general the same holds for its components. Note that $\mathcal{G}^{k-1}_{d_i}(C_i)$ is non empty, 
since we are assuming $(L_i,V_i) \in \mathcal{G}^{k-1}_{d_i}(C_i)$.
Moreover, we  have that $$\dim(V_i)=k = \dim (V),$$
which implies that the restriction map $\rho_i$ induces an isomorphism of $V$ into $V_i$. In particular,
this gives us that $V\cap H^0(L_j(-p))=\{0\}$, i.e. condition $(\star)$ holds.
\hfill\par

It remains to show that, under the assumptions of the Theorem, the vector bundle $M_{L_i,V_i}$ is semistable. This follows from  \cite[Theorem 5.1]{BBN2} since $C_i$ is general and $(L_i,V_i)$ is general in $\mathcal{G}^{k-1}_{d_i}(C_i)$. 
\end{proof}

\begin{corollary}
Under the hypothesis of Theorem \ref{THM:MAIN2}, if we also  require  that a component $C_i$ is Petri-general, $k\geq 6$ and $g_i\geq 2k-6$, then $M_{L,V}$ is  $w$-stable.
\end{corollary}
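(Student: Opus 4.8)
The plan is to run the argument of Theorem~\ref{THM:MAIN2} and then upgrade its conclusion by appealing to the stability clause of Theorem~\ref{THM:wstab}. The hypotheses of Theorem~\ref{THM:MAIN2} are in force, and its proof already yields two facts: $(L,V)$ satisfies condition~$(\star)$, and the kernel bundles $M_{L_1,V_1}$, $M_{L_2,V_2}$ are semistable. By the first part of Lemma~\ref{lemmatecnico2} these are exactly the restrictions $M_{L,V}\otimes\OO_{C_j}$, and by the second part there is a polarization $w$ satisfying the numerical conditions \ref{COND}. Hence, by the last assertion of Theorem~\ref{THM:wstab}, it is enough to prove that $M_{L_i,V_i}$ is \emph{stable} for the single component $C_i$ singled out in the statement: then $M_{L,V}$ is $w$-stable for that polarization $w$.

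The second step is to check that the extra hypotheses — $C_i$ Petri-general, $k\ge 6$, $g_i\ge 2k-6$ — are precisely those under which stability (not merely semistability) of the kernel bundle of a general generated linear series is known. Since $\mathcal{G}^{k-1}_{d_i}(C_i)$ is non-empty (it contains $(L_i,V_i)$), the Brill--Noether number $\rho_i=g_i-k(g_i-d_i+k-1)$ is non-negative, i.e. $d_i\ge g_i+k-1-g_i/k$; moreover $(L_i,V_i)$ is a general element of $\mathcal{G}^{k-1}_{d_i}(C_i)$ by assumption. Under these conditions the stability part of \cite[Theorem~5.1]{BBN2} (see also the stability criteria of \cite{BBN1}) applies to $(L_i,V_i)$ on the Petri-general curve $C_i$ and gives that $M_{L_i,V_i}$ is stable. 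Combining this with the reduction of the previous paragraph completes the proof, and the displayed corollary on the last line follows.

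The only potential friction is a bookkeeping issue about genericity: one must make sure that $(L_i,V_i)$, which enters the picture as the restriction of the global pair $(L,V)$, is being asked to be ``general'' in the same sense as in \cite[Theorem~5.1]{BBN2}. Here this is not an obstacle, because Theorem~\ref{THM:MAIN2} — and hence the corollary — already carries ``$(L_i,V_i)$ general in $\mathcal{G}^{k-1}_{d_i}(C_i)$'' as a hypothesis, so no dominance argument for a restriction map on Brill--Noether loci is required. In effect the corollary has essentially no new content beyond Theorem~\ref{THM:MAIN2}: Theorem~\ref{THM:wstab} promotes stability of a single restriction $M_{L_i,V_i}$ to $w$-stability of $M_{L,V}$, and \cite[Theorem~5.1]{BBN2} supplies that stability as soon as $C_i$ is Petri-general with $k\ge 6$ and $g_i\ge 2k-6$.
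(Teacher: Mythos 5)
Your argument follows the paper's proof exactly: invoke Theorem \ref{THM:MAIN2} for $w$-semistability, then use the stability clause (Theorem \ref{stabilityconditions} / the last assertion of Theorem \ref{THM:wstab}) to reduce to stability of the single restriction $M_{L_i,V_i}$, which the extra hypotheses supply via \cite{BBN2}. The only slip is bibliographic: the stability statement under the hypotheses ``$C_i$ Petri-general, $k\geq 6$, $g_i\geq 2k-6$'' is \cite[Theorem 6.1]{BBN2}, not a ``stability part'' of Theorem 5.1 (which gives only semistability and is what Theorem \ref{THM:MAIN2} already uses).
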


\begin{proof}
By Theorem   \ref{THM:MAIN2} there exists a polarization $w$  such that  $M_{L,V}$ is $w$-semistable. According to Theorem \ref{stabilityconditions}, it is enough to show that  one of the restrictions of $M_{L,V}$ is stable.  Since $C_i$ is Petri, $k \geq 6$ and  $g_i \geq 2k-6$, then  $M_{L_i,V_i}$ is stable, from \cite[Theorem 6.1]{BBN2}, so we can conclude that $M_{L,V}$ is $w$-stable too.  
\end{proof}

The second application of Theorem \ref{THM:wstab} deals with generated pairs $(E,V)$ whose restrictions to each irreducible component $C_i$ is the complete  pair $(E_i,H^0(E_i))$. By Lemma \ref{lemmatecnico2}, as we are requiring condition $(\star)$ to hold, this occurs when $\rho_i|_V:V\to H^0(E_i)$ is an isomorphism.  In particular, this implies that in this case, $\dim(V) = h^0(E_1) = h^0(E_2)$. 

\begin{theorem}
\label{THM:3}
Let $C$ be a reducible  nodal curve as in Section \ref{sec1}. Let $E$ be a vector bundle on $C$ of rank $r \geq 1$. Assume that  its  restrictions $E_i$  are semistable of degree $d_i\geq 2rg_i$ and  satisfy  $d_1-d_2=r(g_1-g_2)$ so that $h^0(E_1)=h^0(E_2)=k$.
Then there exists a polarization $w$ such that for a general $V\in G(k,H^0(E))$ we have that $M_{E,V}$ is  $w$-semistable.  Moreover, if  $d_i>2rg_i$ for $i=1,2$, then $M_{E,V}$ is $w$-stable. 
\end{theorem}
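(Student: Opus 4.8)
The plan is to deduce the statement from Theorem~\ref{THM:wstab} applied to $M_{E,V}$, feeding in Butler's theorem (recalled in the Introduction, \cite{B1}) on each component $C_i$. So I first need to produce, for a general $V$, a generated pair satisfying $(\star)$ whose restricted kernel bundles are semistable (resp.\ stable).

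\emph{General $V$ is a generated pair satisfying $(\star)$.} Since $E_i$ is semistable with $\mu(E_i)=d_i/r\geq 2g_i$, it is globally generated and non-special, so $h^0(E_i)=d_i-rg_i+r$; the balance assumption $d_1-d_2=r(g_1-g_2)$ then forces $h^0(E_1)=h^0(E_2)=k$, and by Lemma~\ref{lemmatecnico0} (and its proof) one gets $h^0(E)=2k-r$ and $h^0(E_j(-p))=h^0(E_j)-r=k-r$. As in the Remark preceding Theorem~\ref{THM:MAIN2}, the set of $V\in G(k,H^0(E))$ for which $\ev_V$ is surjective is a non-empty Zariski open subset, and each Schubert cycle $\sigma_j=\{\,V : V\cap H^0(E_j(-p))\neq\{0\}\,\}$ is a proper closed subvariety, because a general $k$-dimensional subspace of $H^0(E)$ meets the fixed $(k-r)$-dimensional subspace $H^0(E_j(-p))$ only in $\{0\}$ (here $k+(k-r)=h^0(E)$). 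Hence a general $V$ generates $E$ and lies outside $\sigma_1\cup\sigma_2$, i.e.\ $(E,V)$ is generated and satisfies $(\star)$.

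\emph{Identifying the restrictions and concluding.} For such a $V$, Lemma~\ref{lemmatecnico2}(1) gives $M_{E,V}\otimes\OO_{C_i}\simeq M_{E_i,V_i}$, and since $\rho_i|_V\colon V\to V_i$ is an isomorphism with $\dim V=k=h^0(E_i)$ we have $V_i=H^0(E_i)$, so $M_{E_i,V_i}=M_{E_i}$ is the complete kernel bundle. As $E_i$ is semistable of degree $d_i\geq 2rg_i$, $M_{E_i}$ is semistable by \cite{B1}, so Theorem~\ref{THM:wstab} applies and yields a polarization $w$ — concretely any rational $w_1$ in the open interval $(a_1,b_1)$ of Lemma~\ref{lemmatecnico2}(2), which depends only on the fixed data $k,r,d_1,d_2,g_1,g_2$ and is therefore uniform in $V$ — for which $M_{E,V}$ is $w$-semistable. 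For the last assertion, when $d_i>2rg_i$ one has $\mu(E_i)>2g_i$ and $M_{E_i}$ is stable by \cite{B1}, so the final clause of Theorem~\ref{THM:wstab} upgrades $w$-semistability to $w$-stability.

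I expect the only genuinely delicate point to be the codimension estimate in the first step: here $k+h^0(E_j(-p))$ equals $h^0(E)$ rather than being strictly smaller, so the bad locus $\sigma_j$ is only just proper and one has to check (as above, using $\dim H^0(E_j(-p))=k-r<k$) that it is not all of $G(k,H^0(E))$. Everything else is a direct assembly of Lemmas~\ref{lemmatecnico0} and \ref{lemmatecnico2}, Theorem~\ref{THM:wstab}, and Butler's theorem.
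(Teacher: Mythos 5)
Your proposal is correct and follows essentially the same route as the paper's proof: compute $h^0(E)=2k-r$ and $h^0(E_j(-p))=k-r$ from global generation and non-speciality, observe that a general $V$ of dimension $k$ avoids the two $(k-r)$-dimensional subspaces $H^0(E_j(-p))$ so that $(E,V)$ is generated and satisfies $(\star)$ with $V_i=H^0(E_i)$, and then combine Butler's theorem for $M_{E_i}$ with Theorem \ref{THM:wstab}. Your added observation that the interval $(a_1,b_1)$ of Lemma \ref{lemmatecnico2} depends only on $(k,r,d_1,d_2,g_1,g_2)$, so the polarization can be chosen uniformly in $V$, is a worthwhile point that the paper leaves implicit.
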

\begin{proof}
Since $d_i\geq 2rg_i$ and $E_i$ is semistable we have that $E_i$ is globally generated, $h^1(E_i)=0$ and $h^0(E_i)=d_i+r(1-g_i)$. Using the same argument of the proof of Lemma \ref{lemmatecnico0}, we have that $h^0(E)=h^0(E_1)+h^0(E_2)-r$ (note that in the proof it is only needed that $E_i$ is globally generated). Hence we have $h^0(E)=2k-r$. \\

\noindent Now we will show that for general  $V\in G(k,H^0(E))$ we have that $\rho_i|_V$ is an isomorphism. Assume that $j \in\{1,2\}$ with $j\neq i$. As $E_j$ is globally generated we have $h^0(E_j(-p))=h^0(E_j)-r=k-r$. From the exact sequence 
$$
\xymatrix{
0 \ar[r] & H^0(E_j(-p))\ar[r] & H^0(E) \ar[r]^-{\rho_i} & H^0(E_i)
}
$$
we have $\dim(\rho_i(H^0(E)))=2k-r-(k-r)=k=h^0(E_i)$ so $\rho_i$ is surjective. 

\noindent Notice that $k+h^0(E_j(-p))=h^0(E)$, hence, for a  general subspace $V$ of $H^0(E)$ of dimension $k$ we have  $V\cap H^0(E_j(-p))=\{0\}$. This imply that $\rho_i|_V$ is injective and hence an isomorphism as $\dim(V)=h^0(E_i)$.  
Moreover, since $E_i$ is globally generated, for $V$ general in $G(k,H^0(E))$, $(E,V)$ is a generated pair and satisfies condition $(\star)$.
\\

By Theorem \ref{THM:wstab}, in order to conclude it is enough to prove that both $M_{E_i,V_i}$ are semistable. By hypothesis,
we have
$M_{E_i,V_i} = M_{E_i}.$
Hence, the result follows from \cite{B1}, Theorem 1.2: $M_{E_i}$ is semistable when $d_i \geq 2rg_i$  and it is stable if $d_i > 2rg_i$. 
\end{proof}


\begin{thebibliography}{[M-F]}

\bibitem{ACGH}
E. Arbarello, M. Cornalba, P. Griffiths, J. Harris,
{\em Geometry of algebraic curves. {V}ol. {I}} {\bf 267}, (1985),
Springer-Verlag, New York.

\bibitem{Be}
A.Beauville,
{Some stable vector bundles with reducible theta divisor},
{\it Manuscripta Math.} {\bf 110}, (2003), n 3,  343-349.

\bibitem{BBN1}
U.N.Bhosle, L. Brambila-Paz, P.E. Newstead,
{On coherent systems of type $(n,d,n+1)$ on Petri curves}, {\it Manuscripta Math.}  {\bf 126},(2008), n 4,  409-441.

\bibitem{BBN2}
U.N. Bhosle, L. Brambila-Paz, P.E. Newstead,
{ On linear series and a conjecture of D. C. Butler}, {\it Internat. J. Math.} {\bf 26} (2015), n. 2, 1550007, 18pp. 

\bibitem{BB}
M.Bolognesi, S.Brivio, 
{Coherent systems and modular subvarieties of $SU_C(r)$},
{\it Internat. J. Math.}  {\bf 23}, n 4, (2012), 1250037, 23pp. 

\bibitem{B-P}
L.Brambila-Paz, 
{ Non-emptyness of moduli spaces of coherent systems}, {\it Internat. J. Math.} {\bf 19}, (2008), n 7, 779-799. 

\bibitem{BGN}
L. Brambila-Paz, I. Grzegorczyk, P. Newstead,
{Geography of Brill-Noether loci for small slopes}, {\it J. Algebraic. Geom.} {\bf 6} (1997), n 4, 645-669.  

\bibitem{BGMN}
S.B.Bradlow, O. Garcia-Prada, V.Munoz, P.E. Newstead,
{Coherent systems and Brill-Noether theory}, {\it Internat. J. Math.} {\bf 14}, (2003), n 7, 683-733.

\bibitem{B15}
S.Brivio, 
{A note on theta divisors of stable bundles}, {\it Rev. Mat. Iberoam.} {\bf 31}, (2015), n 2, 601-608.

\bibitem{B17}
S.Brivio
{Families of vector bundles and linear systems of theta divisors}, {\it Internat. J. Math.}, {\bf 28}(2017), n 6, 1750039, 16pp. 

\bibitem{Br}
S.Brivio,
{Theta divisors and the geometry of tautological model}, {\it Collect. Math.}, {\bf 69},(2018), n 1,   131-150. 

\bibitem{BF1}
S. Brivio, F. Favale,
{ Genus 2 curves and generalized theta divisors}, 
{\it Bull. Sci. Math}, {\bf 155}, (2019), 112-140 

\bibitem{BF2}
S.Brivio, F. Favale,
{On vector bundles over reducible curves with a node},
To appear on {\it Adv. Geom.} (2019)

\bibitem{BV}
S.Brivio, A. Verra, 
{ Pluecker forms and the theta map}, {\it Am. J. Math.} {\bf 134}, (2012), n 5,  1247-1273.

\bibitem{B1} 
D.Butler, {Normal generation of vector bundles over a curve}, {\it J. Differential Geom.}, {\bf 39} (1994), n 1,  1--34.  

\bibitem{B2}
D.Butler, 
{ Birational maps of moduli of Brill-Noether pairs}, Preprint arXiv:alg-geom/9705009v1 (1997). 

\bibitem{C}
C. Camere, 
{About the stability of the tangent bundle restricted to a curve}, {\it C. R. Math. Acad. Sci. Paris}, {\bf 346} (2008), n 7-8, 421--426 

\bibitem{DM}
P.Deligne, D.Mumford, 
{The irreducibility of the space of curves of given genus},
{\it Inst. Hautes Etudes Sci. Publ. Math.},  {\bf 36} (1969), 75--120.

\bibitem{EL}
L.Ein, R. Lazarsfeld, 
{Stability and restrictions of Picard bundles, with an application to the normal bundles of elliptic curves}, {\it Complex projective geometry} (Trieste, 1989/Bergen 1989), London Math. Soc. Lecture Note Ser.  {\bf 179}, (1992), Cambridge Univ. Press, Cambridge, pp. 149-156. 

\bibitem{FT}
M. Franciosi, E. Tenni
{On {C}lifford's theorem for singular curves},
{\it Proc. Lond. Math. Soc.} (3),
{\bf 108}, (2014), n 3,  225--252

\bibitem{Gie}
D. Gieseker, {\em Lectures on moduli of curves}, Tata Institute of Fundamental Research, Lectures on Mathematics and Fhysics, {\bf 69}, (1982), Springer-Verlag. 

\bibitem{KN}
A.D. King, P. Newstead, { Moduli of {B}rill-{N}oether pairs on algebraic curves}, {\it Internat. J. Math.} {\bf 6}, (1995), n 5,  733-748.

\bibitem{La}
R. Lazarsfeld, {A sampling of vector bundles techniques in the study of linear series}, {\it Lectures on Riemann surfaces}, Trieste 1987,  500-559, World Sci. Pub. Teaneck NJ (1989). 

\bibitem{M}
E.C. Mistretta, 
{Stability of line bundle trasforms on curves with respect to two codimensional subspaces}, {\it J. Lond. Math. Soc.} (2), {\bf 78} (2008), no. 1, 172-182.

\bibitem{MS}
E.C. Mistretta, L. Stoppino,
{Linear series on curves: stability and Clifford index}, {\it Internat. J. Math.} {\bf 23},(2012), no 12, 1250121, 25 pp. 

\bibitem{Po}
M. Popa,
{Generalized theta linear series on moduli spaces of vector bundles on curves}, {\it Handbook of moduli}, {\bf III}, 219-255, {\it Adv. Lect. Math.} (ALM) {\bf 26}, (2013) Int. Press, Somerville, MA.

\bibitem{S}
 C.S.Seshadri, {\em Fibres vectorials sur les courbes algebriques}, Asterisque {\bf 96}, (1982).
 
 \bibitem{T1}
 M.Teixidor, { Moduli spaces of vector bundles on reducible curves}, {\it Amer. J. Math.}, {\bf 117}, (1995), n 1, 125-139.

\bibitem{T2} 
M.Teixidor, {Vector bundles on reducible curves and applications},
{\it Grassmannian, moduli spaces and vector bundles}, 169-180,  Clay Math. Proc.,{\bf 14}, Amer. Math. Soc., Providence, RI, (2011). 


\end{thebibliography}
\end{document}